\documentclass[a4paper,fleqn]{cas-sc}

\usepackage[numbers,sort&compress]{natbib}
\usepackage[ruled]{algorithm2e}
\usepackage{amsthm}
\usepackage{pgfplots}
\usepackage{subcaption}
\usepackage{relsize}
\usepackage{placeins}

\DeclareUnicodeCharacter{2217}{*}
\pgfplotsset{compat=1.18}
\newtheorem{theorem}{Theorem}
\newdefinition{remark}{Remark}

\renewcommand{\printorcid}{}

\begin{document}
\let\WriteBookmarks\relax
\renewcommand{\textfraction}{.05}
\renewcommand{\floatpagefraction}{.7}
\setcounter{topnumber}{5}
\setcounter{bottomnumber}{5}
\setcounter{totalnumber}{10}

\shorttitle{An Iterative Active Subspace Approach for Parametric Model Order Reduction}
\shortauthors{Wang et al.}
\title[mode=title]{An Iterative Active Subspace Approach for Model Order Reduction of Parametric Systems with High-Dimensional Parameter Spaces}

\author[1,2]{Chenzi Wang}
\credit{Conceptualization, Methodology, Software, Formal analysis, Investigation, Validation, Visualization, Writing -- original draft, Writing -- review \& editing}

\author[1]{Peizhi Yu}
\credit{Software, Investigation, Validation}

\author[2]{Lihong Feng}
\credit{Conceptualization, Formal analysis, Supervision, Writing -- review \& editing}

\author[2]{Peter Benner}
\credit{Resources, Writing -- review \& editing}

\author[1]{Wenshuai Lu}
\credit{Resources}

\author[1]{Zheng You}
\credit{Conceptualization, Supervision}

\affiliation[1]{organization={Department of Precision Instrument, Tsinghua University},
            city={Beijing},
            postcode={100084},
            country={China}}

\affiliation[2]{organization={Department of Computational Methods in Systems and Control Theory, Max Planck Institute for Dynamics of Complex Technical Systems},
            city={Magdeburg},
            postcode={39106},
            country={Germany}}

\begin{abstract}
The increasing complexity in design and manufacturing has driven the need for advanced techniques for fast modeling problems with large-dimensional parameter spaces. Avoiding high-fidelity finite element models while achieving fast and accurate simulations in such contexts is challenging. Parametric model order reduction (pMOR) has drawn significant attention in recent years. Nevertheless, the curse of dimensionality in parameter spaces has severely limited its effectiveness. The active subspace (AS) approach has been successfully applied to pMOR for systems with many parameters. However, the balance between accuracy and compactness of the reduced model remains problematic for such systems with high-dimensional parameter spaces. It often results in models that are either not small enough or not accurate enough. In this paper, we propose an iterative active subspace (IAS) approach for parametric model order reduction, which, to some extent, addresses the trade-off between accuracy and reduced model size and achieves substantial computational gains compared to the original active subspace method.
\end{abstract}

\begin{keywords}
\scriptsize Model Order Reduction \sep Large-dimensional parameter spaces \sep Active subspace \sep Structural Design
\end{keywords}

\maketitle
\section{Introduction}
\label{sec1}
The increasing complexity and diversification in design and manufacturing have driven the demand for mathematical modeling of problems with large-dimensional parameter spaces. In practical applications, analyzing just a single parameter or a few parameters is no longer adequate to meet specific design requirements. For instance, many structural design issues require optimization involving numerous parameters. Traditional computations based on the finite element method (FEM) are often restricted to parametric sweeps involving only one or a few parameters~\cite{GenMSetal13}. Consequently, design based on parametric modeling always depends heavily on the engineer's expertise.

The challenges of simulation and design optimization for problems with high-dimensional parameter spaces encompass two main aspects. Firstly, the simulation of physical fields largely depends on the finite element method, resulting in high-fidelity models with very high dimensionality, often reaching $10^4$ to $10^6$ degrees of freedom (DoF) in the solution space. This leads to significant computational complexity. Additionally, the parameter spaces are high-dimensional due to the presence of many undetermined parameters (often in the tens or hundreds). This results in the ``curse of dimensionality"~\cite{ForSK08,AshCJetal15}, causing an exponential increase in the number of high-fidelity model evaluations as the number of parameters grows. As a result, conventional methods become nearly infeasible for such problems with large-dimensional parameter spaces.

PMOR~\cite{morBenGW15} is a computational framework that aims to alleviate the burden of simulating high-fidelity systems by constructing compact surrogate models~\cite{morBenOCetal17}. By projecting the original parameter-dependent system onto a reduced subspace spanned by carefully selected basis functions, pMOR preserves input-output behavior while significantly accelerating simulations~\cite{morBenGW15}. Typical projection-based pMOR approaches~\cite{morBenOCetal17,morBenGQetal21a,morBenGQetal21b} include multi-moment-matching, proper orthogonal decomposition, and reduced basis methods, which have demonstrated success in applications ranging from fluid dynamics to structural mechanics~\cite{morFenYBetal16,morLasMQetal14}. However, traditional pMOR techniques still face limitations when handling high-dimensional parameter spaces, as the required number of training samples grows rapidly with parameter dimensionality~\cite{morHesSZ14,morCheG19}.

Recent advances in pMOR have explored coupling with parameter dimension reduction techniques, such as the active subspaces method~\cite{morCon15}. The active subspace method identifies dominant linear subspaces in the parameter domain through gradient-based covariance analysis, effectively decoupling influential parameters from less significant ones~\cite{morConDW14}. This hybrid approach has shown promise in mitigating the curse of dimensionality by first reducing the dimension of the parameter space before constructing reduced-order models (ROMs)~\cite{morTezBR18,morTezDGetal18}. However, this approach faces a key challenge: reducing parameter dimensions shrinks the ROM size but increases approximation errors. For real engineering problems with many parameters, the existing active subspace method often fails to balance accuracy and model compactness effectively. The resulting ROMs tend to be either too large for practical use or too inaccurate for reliable predictions, which is a critical limitation in large-scale applications.

In this paper, we propose an iterative active subspace (IAS) method that builds reduced-order models by repeatedly applying the active subspace method to the error system of the current ROM approximation. At each iteration, one small sub-ROM is generated. With the error rapidly decreasing through iterations, the final IAS-ROM emerges from accumulating the sub-ROMs at previous iterations. This approach effectively balances model size and accuracy while demonstrating superior computational efficiency compared to the original active subspace method.

Section 2 provides a brief introduction to projection-based MOR, the original active subspace method, and its use in pMOR. Section 3 details our proposed iterative active subspace method (IAS) and its application to pMOR. In Section 4, we validate our method using two mechanical models with numerous parameters. Conclusions are drawn in Section 5.

\section{Preliminaries}
In this section, the high-dimensional parameter problem and the projection-based pMOR method are introduced. Following this, the standard active subspace method and its application to pMOR are presented.
\subsection{Problem setting and pMOR}

We consider parametric linear time-invariant (LTI) systems expressed in the following three fundamental representations. The second-order formulation:
\begin{align}
    \begin{array}{rl}
        M(\mu)\ddot x(\mu,t)+D(\mu)\dot x(\mu,t)+K(\mu) x(\mu,t)&=B(\mu)u(t),\\
        y(\mu,t)&=C(\mu)x(\mu,t),
    \end{array}
    \label{eq:second_order_pLTI}
\end{align}
where $x(\mu,t) \in \mathbb{R}^{n} ~\text{ is the state vector}, \mu=[\mu_1,\ldots,\mu_{n_\mu}]^T \in \mathcal P \subset \mathbb R^{n_\mu}$ is the vector of parameters. \(M(\mu),D(\mu),K(\mu) \in \mathbb R^{n \times n}\) are the mass, damping, and stiffness matrices, respectively. \(B(\mu) \in \mathbb{R}^{n\times n_I}, C(\mu) \in \mathbb{R}^{n_O\times n}\) are the input and output matrices, respectively.

For systems already given in first-order form, we use:
\begin{align}
    \begin{array}{rl}
        G(\mu )\dot x(\mu,t) &= A(\mu )x(\mu,t) + B(\mu )u(t),\\
        y(\mu,t) &= C(\mu )x(\mu,t),
    \end{array}
    \label{equ:pLTI}
\end{align}
where  $ A(\mu), G(\mu) \in \mathbb{R}^{n\times n}, B(\mu) \in \mathbb{R}^{n\times n_I}$. 

Sometimes, we are also interested in the static system:
\begin{align}
\begin{array}{rl}
     K(\mu) x(\mu)&=B(\mu),  \\
     y(\mu)&=C(\mu)x(\mu),   
\end{array}
\label{equ:std_sys}
\end{align}
where $x(\mu) \in \mathbb{R}^{n} ~\text{ is the steady-state solution}, K(\mu) \in \mathbb{R}^{n\times n}$,  $B(\mu) \in \mathbb{R}^{n\times n_I}$ $(n_I=1)$.

These formulations typically feature high-dimensional state spaces (\( n \sim 10^4\text{--}10^6 \)) and large parameter dimensionality (\( n_\mu \sim 10\text{--}10^3 \)), presenting significant computational challenges for analysis and control synthesis.

The systems in~(\ref{eq:second_order_pLTI}),~(\ref{equ:pLTI}) or~(\ref{equ:std_sys}) are often referred to as full-order model (FOM). With the typical projection-based pMOR, we can sample in the parameter space and get a set of parameter points $\{\mu\}=\{\mu^1,\mu^2,\ldots,\mu^{M_V}\}$, via, e.g., a greedy process. At every sample point $\mu^k \in \mathbb{R}^{n_\mu}$, a local reduced basis $V_k$ can be constructed~\cite{morBenGW15,morBenOCetal17,morFenYBetal16,morBenGQetal21c}. By orthogonalizing the columns of all the local matrices, we obtain the global projection matrix $V=\mathrm{orth}([V_1,V_2,\ldots,V_{M_V}]) \in \mathbb{R}^{n \times r}$, where orth() means orthogonalization via, e.g., QR decomposition or modified Gram-Schmidt process (MGS). The reduced order model (ROM) for~(\ref{eq:second_order_pLTI}) can be represented as:
\begin{align}
    \begin{array}{rl}
        M_r(\mu)\ddot x_r(\mu,t)+D_r(\mu)\dot x_r(\mu,t)+K_r(\mu) x_r(\mu,t)&=B_r(\mu)u(t),\\
        y_r(\mu,t)&=C_r(\mu)x_r(\mu,t),
    \end{array}
    \label{equ:pLTI_sec_order_ROM}
\end{align} where $M_r(\mu)=V^TM(\mu )V$, $D_r(\mu)=V^TD(\mu )V$, $K_r(\mu)=V^TK(\mu )V$, $B_r(\mu)=V^TB(\mu )$, $C_r(\mu)=C(\mu )V$.

\vspace{1ex}Similarly, the ROM for~(\ref{equ:pLTI}) can be derived as:
\begin{align}
    \begin{array}{rl}
        G_r(\mu )\dot x_r(\mu,t) &= A_r(\mu )x_r(\mu,t) + B_r(\mu )u(t),\\
        y_r(\mu,t) &= C_r(\mu )x_r(\mu,t),
    \end{array}
    \label{equ:pLTI_ROM}
\end{align}
where $G_r(\mu)=V^TG(\mu )V,A_r(\mu)=V^TA(\mu )V$.

\vspace{1ex}Moreover, the ROM for the static system~(\ref{equ:std_sys}) can be written as:
\begin{align}
    \begin{array}{rl}
         K_r(\mu) x_r(\mu)&=B_r(\mu), \\
         y(\mu)&=C_r(\mu)x_r(\mu),
    \end{array}
    \label{equ:pLIT_std_ROM}
\end{align}
where $K_r(\mu)=V^TK(\mu )V,B_r(\mu)=V^TB(\mu)$.

However, with the increment of the number of parameters $n_\mu$, more samples are needed in the parameter space, in order to get an accurate approximation for the original model. While the number of parameters $n_\mu$ reaches $10^{1}-10^{3}$ in many engineering problems, the amount of samples $M_V$ can be really large, resulting in a very high-rank global projection matrix $V$. As a result, the ROM size $r$ is not small enough; such that the computational gain for pMOR can be limited.

Therefore, how to get small and accurate ROMs for models with large-dimensional parameter spaces is crucial for surrogate modeling methods to be deployed in engineering practices.

\subsection{Active subspace method}

In order to deal with high-dimensional parameter spaces, Active Subspace Methods (AS) have gained popularity for performing subspace sensitivity analysis on black-box functions. This method involves identifying the most influential directions (subspaces) along which a scalar function of many variates exhibits significant variations~\cite{morCon15,RomTLetal22,RomTR24,morTezRR22}.
Considering a nonlinear function $f(\mu)$, the process of identifying the active subspace is shown in Algorithm \ref{alg:Active subspace computation}.

\begin{algorithm}
    \caption{Active subspace computation}
    \label{alg:Active subspace computation}
    \textbf{Input:} A multivariate function $f(\mu) \in \mathbb R, \mu:=(\mu_1,\ldots,\mu_{n_\mu})^T\in \mathbb{R}^{n_\mu}$, a density function $\rho$ defining the distribution of $\mu$, and active subspace dimension $r_\mu$.
    
    \textbf{Output:} active subspace $\mathcal U$.
    
    1. Draw \( M \) samples \( \{\mu^j\}, j=1,\ldots,M, \mu^j:=({\mu_1}^j,\ldots,{\mu_{n_\mu}}^j)^T \in \mathbb{R}^{n_\mu} \)
    
    \hspace{3ex}independently according to the density function \( \rho \).

    2. For each \( \mu^j \), compute \( \nabla_{\mu^j} f := \nabla f(\mu^j) \in \mathbb{R}^{n_\mu},j=1,\ldots,M\).
    
    \hspace{3ex}Here, $\forall \mu$, $\nabla f(\mu):=(\frac{\partial f}{\partial \mu_1},\ldots,\frac{\partial f}{\partial \mu_{n_\mu}})^T$.
    
    3. Compute: \vspace{-4ex} \[\hat{C} = \frac{1}{M} \sum_{j=1}^{M} (\nabla_{\mu^j} f)\cdot(\nabla_{\mu^j} f)^T, \space \hat{C} \in \mathbb R^{n_\mu \times n_\mu} \].
    \vspace{-3ex}
    
    4. Compute the eigendecomposition \( \hat{C} = W\Lambda W^T \), $W \in \mathbb R^{n_\mu \times n_\mu}$

    5. $\mathcal U=\operatorname{colspan}(U)$, $U=W_1, W=[W_1,W_2], W_1 \in \mathbb R^{n_\mu \times r_\mu}$,
    
    \hspace{3ex}$\operatorname{colspan}(\cdot)$ means the subspace spanned by the columns of $\cdot$.
    
\end{algorithm}

Steps 3-4 in Algorithm \ref{alg:Active subspace computation} are equivalent to computing the singular value decomposition (SVD) of the matrix~\cite{morCon15}

\begin{align}
     \frac{1}{\sqrt{M}} \begin{bmatrix}
    \nabla_{\mu^1}f & \ldots & \nabla_{\mu^M}f
    \end{bmatrix}
    = W\xi V^T, \text{where} ~\xi^2 = \Lambda.
    \label{eq:svd as}
\end{align}

The dominant left singular vectors $U=W_1$ span the active subspace $\mathcal{U}$ and $\operatorname{colspan} \{W_2\}$ is defined as the inactive subspace, where $\operatorname{colspan}()$ means the subspace spanned by the columns of $W_2$. The function $f(\mu)$ can be approximated as:
\begin{align}
    \begin{array}{llll}
        f(\mu)&=f(WW^T\mu)\\
        &=f(W_1W_1^T\mu+W_2W_2^T\mu)\\
        &\approx f(W_1 \mu_r),
    \end{array}
    \label{eq:fx}
\end{align} 
where $\mu_r=W_1^T\mu\in \mathbb R^{r_\mu}, r_\mu \ll n_\mu$. Sampling of $\mu$ in the whole $n_\mu-$dimensional parameter space $\mathcal P$ can then be implemented by only sampling $\mu_r$ in the $r_\mu-$dimensional active subspace, which greatly reduces the computational complexity and breaks the curse of dimensionality to some extent.

\subsection{Active subspace for parametric Model Order Reduction}

The active subspace approach has been successfully applied to pMOR for parametric systems in~\cite{morConDW14,morTezBR18,SreORetal23}. Algorithm 2 presents the active subspace method applied to the transfer function of (\ref{eq:second_order_pLTI}), (\ref{equ:pLTI}) or (\ref{equ:std_sys}):
\begin{align}
    H(\mu,s) = C(\mu)Q(\mu,s)^{-1}B(\mu), 
    \label{eq:trans_func}
\end{align}

\noindent where $s$ is the Laplace variable, $Q(\mu,s)=s^2M(\mu)+sD(\mu)+K(\mu)$ for~(\ref{eq:second_order_pLTI}), $Q(\mu,s)=sG(\mu)-A(\mu)$ for~(\ref{equ:pLTI}) and $Q(\mu,s)=K(\mu)$ for~(\ref{equ:std_sys}).

Using the active subspace method, we create a sample set in the parameter space \( \{(\mu^j,s^j)\}, j=1,\ldots,M, \mu^j \in \mathbb{R}^{n_\mu} \) and compute \(\nabla_{\mu}H_{oi}(\mu^j,s^j) \in \mathbb R^{n_\mu}\) which is the gradient of $H_{oi}(\mu,s^j)$ w.r.t. $\mu$ and evaluated on $\mu^j$. Here, $H_{oi}(\mu,s)$ is the $o, i$-th entry of $H(\mu,s), o=1,\ldots,n_O,i=1,\ldots,n_I$. Each gradient $\nabla_\mu H_{oi}(\mu^j, s^j)$ constitutes a covariance matrix: 
$\left(\nabla_\mu H_{oi}(\mu^j, s^j) \right) \cdot \left( \nabla_\mu H_{oi}(\mu^j, s^j) \right)^T$. The matrix $\hat C^j \in \mathbb R^{n_\mu \times n_\mu}$ in step 3 is the sum of all the covariance matrices over all the inputs and outputs. The final covariance matrix $\hat  C$ is the sum of $\hat C^j$ over all the M samples. Consequently, the matrix $U \in \mathbb R^{n_\mu \times r_\mu} (r_\mu \ll n_\mu)$ is computed via the eigendecomposition of $\hat C$ , so that its columns span the active subspace $\mathcal U$ of the parameter domain $\mathcal P$. The active subspace $\mathcal U$ defines the dominant subspace of $\mathcal P$, such that the parameter sampling can be done in $\mathcal U$. Thus, the reduced transfer function can be computed by using $U$ and a projection matrix $V \in \mathbb R^{n\times r}, ~r\ll n$: 
{\small{\begin{align}
    H_r(\mu,s)&=C(UU^T\mu)V(V^TQ(UU^T\mu,s)V)^{-1}V^TB(UU^T\mu).
    \label{eq4}
\end{align}}}

\begin{algorithm}
    \caption{Active subspace method for parametric model order reduction (AS-pMOR)}
    \label{alg:ASpMOR}
    \textbf{Input:} Transfer function of the original system (1) \(H(\mu,s)=C(\mu)Q(\mu,s)^{-1}B(\mu)\in\mathbb{R}^{n_O\times n_I}\),\\
    \hspace{1ex}Active subspace dimension $r_\mu$,\\
    \hspace{1ex}Sampling number $M$ for extracting the active subspace, and $M_V$ for the reduced basis method.\\
    
    \textbf{Output:} Reduced order model $H_r(\mu,s)$.
    
    1. Draw \( M \) samples \( \{(\mu^j,s^j)\}, j=1,\ldots,M, \mu^j \in \mathbb{R}^{n_\mu} \).

    2. Let $H_{o,i}(\mu, s)$ be the $o,i$-th entry of $H(\mu, s)$,
    
    \hspace{3ex}$o=1,\ldots,n_O, i=1,\ldots,n_I$. \\
    \hspace{3ex}For each \( (\mu^j,s^j),j=1,\ldots, M\), compute:

    \hspace{3ex}$\nabla_\mu H_{o,i}(\mu^j, s^j) \in \mathbb R^{n_\mu}$, which is the gradient of $H_{o,i}(\mu,s^j)$ 
    
    \hspace{3ex} w.r.t. $\mu$ and evaluated at $\mu^j$ . 
    
    \hspace{3ex} Let $\mathcal{H}^j \in \mathbb R^{{n_O} \times {n_I} \times {n_\mu}}$ be a 3D tensor, 
    
    \hspace{3ex} its $o,i$-th tube fiber is defined as:
    
    \hspace{3ex} $\mathcal {H}^j(o, i, :):=\mathrm{reshape}(\nabla_\mu H_{o,i}(\mu^j, s^j), 1,1,n_\mu) \in \mathbb R^{1\times 1 \times {n_\mu}}$. 
    
    \hspace{3ex} Here, we use the MATLAB function ``reshape" to write the 
    
    \hspace{3ex} gradient vector into a tensor with dimension $1 \times 1 \times n_\mu$, 
    
    \hspace{3ex} according to the definition of a tensor tube fiber~\cite{KilBHetal13}.

    3. Approximate \( \hat{C} = \frac{1}{M} \sum_{j=1}^{M} \hat{C}^j  \in \mathbb R^{n_\mu \times n_\mu} \),

    \hspace{3ex} \({\hat{C}^j}={\xi^j}^T\cdot \xi^j \),~where $\xi^j:=\mathrm{reshape}(\mathcal H^j,\texttt{[~]},1)\in \mathbb{R}^{n_O n_I\times n_\mu}$.
    
    \hspace{3ex} Here we use the MATLAB function ``reshape" to convert tensor
    
    \hspace{3ex} \(\mathcal H^j\) into a Jacobian matrix. 
    
    4. Compute the eigendecomposition: \( \hat C = W\Lambda W^T \), $W \in \mathbb R^{n_\mu \times n_\mu}$.

    5. $U=W_1, W=[W_1,W_2], W_1 \in \mathbb R^{n_\mu \times r_\mu}$.

    6. Draw $M_V$ samples of ${[\mu_r^k,s^k]},k=1,\ldots,M_V$, \\\hspace{3ex}where $\mu_r=U^T\mu \in \mathbb R^{r_\mu}$.

    7. Compute local projection matrices:
    
    \hspace{3ex}$V_k=Q(U\mu_r^k,s^k)^{-1}B(U{\mu_r}^k), ~k=1,\ldots,M_V$.

    8. Compute global projection matrices: \(V=\mathrm{orth}([V_1,\ldots,V_{M_V}])\).

    9. Construct the reduced order model (ROM):
    
    \hspace{2ex}\small{
    $H_r(\mu,s)=C(UU^T\mu)V(V^TQ(UU^T\mu,s)V)^{-1}V^TB(UU^T\mu)$}.
\end{algorithm}

As the active subspace dimension $r_\mu$ is usually much smaller than the parameter space dimension $n_\mu$, instead of sampling the whole parameter space to get a ROM for $H(\mu,s)$, we only need a much smaller number of samples $M_V$ in the active subspace $\mathcal U$ to get an accurate ROM for $H(U\mu_r,s)$, which is then used as the ROM for $H(\mu, s)$. Compared to sampling $\mu$ in $\mathcal P$, the projection basis $V$ obtained from sampling in the active subspace $\mathcal U$ has a much lower rank $r$, resulting in a ROM with a much smaller size $r$.

Using the final projection matrix $V$ in step 8 of Alg.~\ref{alg:ASpMOR} and the active subspace matrix $U$, we can immediately obtain the AS-pROMs of~(\ref{eq:second_order_pLTI}), (\ref{equ:pLTI}) and (\ref{equ:std_sys}) respectively in the time domain as below:
\begin{align}
    \hspace{-2ex}\begin{array}{rl}
        M_r(\mu_r)\ddot x_r(\mu_r,t)+D_r(\mu_r)\dot x_r(\mu_r,t)+K_r(\mu_r) x_r(\mu_r,t)&=B_r(\mu_r)u(t),\\
        y_r(\mu_r,t)&=C_r(\mu_r)x_r(\mu_r,t),
    \end{array}
    \label{equ:ASpMOR_pLTI_sec_order}
\end{align} where $\mu_r=U^T\mu,~M_r(\mu_r)=V^TM(U\mu_r)V$, $D_r(\mu_r)=V^TD(U\mu_r)V$, $K_r(\mu_r)=V^TK(U\mu_r)V$, $B_r(\mu_r)=V^TB(U\mu_r)$, $C_r(\mu_r)=C(U\mu_r)V$,
\begin{align}
    \begin{array}{rl}
        G_r(\mu_r )\dot x_r(\mu_r,t) &= A_r(\mu_r )x_r(\mu_r,t) + B_r(\mu_r )u(t),\\
        y_r(\mu_r,t) &= C_r(\mu_r )x_r(\mu_r,t),
    \end{array}
    \label{equ:ASpMOR_pLTI}
\end{align}
where $G_r(\mu_r)=V^TG(U\mu_r)V,~A_r(\mu_r)=V^TA(U\mu_r)V$, 
\vspace{-1ex}\begin{align}
    \begin{array}{rl}
         K_r(\mu_r) x_r(\mu_r)&=B_r(\mu_r),  \\
         y(\mu_r)&=C_r(\mu_r)x_r(\mu_r).
    \end{array}
    \label{equ:As_pLIT_std_ROM}
\end{align}

However, it is obvious that a trade-off exists in this algorithm. With the decreasing dimension $r_\mu$ of the active subspace, the error between the ROM and the original model is increased due to the increased error between $H(\mu,s)$ and $H(UU^T\mu,s)$. As a result, we often cannot get a satisfactory balance between the ROM accuracy and the ROM size, especially for systems with large parameter dimensions in real engineering applications. 

In the next section, we propose an iterative active subspace method for pMOR, through which this trade-off is overcome. As a result, much smaller and more accurate ROMs can be derived.

\section{The proposed iterative Active Subspace method}

In this section, the basic framework of our iterative active subspace (IAS) method and its further improvement are proposed in Sec 3.1 and Sec 3.2, respectively. A technique for adaptively choosing the dimensions of the active subspaces and an error indicator are proposed in Sec 3.3. Subsequently, the detailed algorithm with a stopping criterion is presented. Finally, Sec 3.4 proposes a simple post-processing strategy to further reduce the size of the final ROM, and Sec 3.5 introduces an acceleration strategy that makes the offline time of our method comparable to the standard AS method (Alg.~\ref{alg:ASpMOR}).

\subsection{Iterative Active Subspace for parametric Model Order Reduction}

The core concept of the proposed iterative active subspace method involves repeatedly applying the active subspace method to the error system between the original system and the iteratively updated ROM system.

We start by applying the AS and pMOR (ASpMOR) to the transfer function of the original system (1). This process yields an initial ROM of \( H(\mu,s) \), denoted as:
\renewcommand{\arraystretch}{1.5}
\vspace{-1ex}
\begin{align}
    \begin{array}{ll}
        &\mathrm{ASpMOR}_{V_1}^{U_1}(H(\mu,s)):=H_{r_1}(\mu,s)\\
        &=C(U_1U_1^T\mu)V_1(V_1^TQ(U_1U_1^T\mu,s)V_1)^{-1}V_1^TB(U_1U_1^T\mu),
    \end{array}
    \label{single active subspace}
\end{align} 
\renewcommand{\arraystretch}{1}where $\operatorname{colspan}(U_1)$ is the active subspace obtained from the standard AS method Alg.~\ref{alg:ASpMOR}. The matrix $V_1$ is the projection matrix used for pMOR, which is the matrix $V$ computed from step 8 of Alg.~\ref{alg:ASpMOR}.  

After this, we compute the error system \( E_1(\mu,s) = H(\mu,s) - H_{r_1}(\mu,s) \) at the first iteration step. By sampling in the parameter space $\mathcal P$ again, the gradient of \( E_1(\mu,s)\) can be obtained, from which a second active subspace \( U_2 \) is computed, indicating the directions along which the error \( E_1(\mu,s) \) changes fast. Sampling within \( \operatorname{colspan}(U_2) \) allows us to derive the projection matrices for pMOR of both \( H(\mu,s) \) and \( H_{r_1}(\mu,s) \), denoted as \( V_2 \) and \( V_2^1 \), respectively. Consequently, the ROM \( E_{r_1}(\mu,s) \) of the error system \( E_1(\mu,s) \) can be obtained via $U_2,V_2, V_2^1$ as follows:
\renewcommand{\arraystretch}{1.5}
\vspace{-2ex}
\begin{align}
    \begin{array}{rl}
        E_{r_1}(\mu,s)=&\mathrm{ASpMOR}_{V_2}^{U_2}(H(\mu,s))-\mathrm{ASpMOR}_{V_2^1V_1}^{U_2U_1}(H(\mu,s))\\=&C(U_2U_2^T\mu)V_2(V_2^TQ(U_2U_2^T\mu,s)V_2)^{-1}V_2^TB(U_2U_2^T\mu)\\&-C(U_1U_1^TU_2U_2^T\mu)V_1V_2^1({V_2^1}^TV_1^TQ(U_1U_1^TU_2U_2^T\mu,s)V_1V_2^1)^{-1}{V_2^1}^TV_1^TB(U_1U_1^TU_2U_2^T\mu).
    \end{array}
    \label{eq:Er1}
\end{align}     
\renewcommand{\arraystretch}{1}

Note that $V_2^1$ is the projection matrix for pMOR of \( H_{r_1}(\mu,s)\), and \( H_{r_1}(\mu,s)\) is already a reduced-order model which does not need to be reduced further, so that $V_2^1$ can be set as an identity matrix. Thus, $E_{r_1}(\mu,s)$ can be written as:
\renewcommand{\arraystretch}{1.5}
\vspace{-1ex}
\begin{align}
    \begin{array}{rl}
        E_{r_1}(\mu,s)=&\mathrm{ASpMOR}_{V_2}^{U_2}(H(\mu,s))-\mathrm{ASpMOR}_{I_{\text{\hspace{1.05ex} }}V_1}^{U_2U_1}(H(\mu,s))\\=&C(U_2U_2^T\mu)V_2(V_2^TQ(U_2U_2^T\mu,s)V_2)^{-1}V_2^TB(U_2U_2^T\mu)\\\ &-C(U_1U_1^TU_2U_2^T\mu)V_1(V_1^TQ(U_1U_1^TU_2U_2^T\mu,s)V_1)^{-1}V_1^TB(U_1U_1^TU_2U_2^T\mu).
    \end{array}
    \label{eq:Er1_iden}
\end{align}     

\renewcommand{\arraystretch}{1}

Instead of using \( H_{r_1}(\mu,s) \) alone, we approximate the original system \( H(\mu,s) \) as \( H_{r_1}(\mu,s) + E_{r_1}(\mu,s) \). This is denoted as the updated ROM \( H_{r_2}(\mu,s) \) for $H(\mu,s)$.

This process can be iteratively repeated. After obtaining the updated ROM \( H_{r_2}(\mu,s) \), the same steps can be applied to $E_2(\mu,s)=H(\mu,s)-H_{r_2}(\mu,s)$ to further refine the approximation. By continually updating the error system, determining its active subspace, and deriving the corresponding projection matrices, we can iteratively enhance the accuracy of the approximation for the original system \( H(\mu,s) \). Finally, the ROM at the $i$-th iteration step can be represented as:
\renewcommand{\arraystretch}{1.5}\begin{align}
    \begin{array}{ll}
         H_{r_i}(\mu,s)&=H_{r_{i-1}}(\mu,s)+E_{r_{i-1}}(\mu,s),
    \end{array}
    \label{eq:IASpMOR_general}
\end{align} with
\begin{align}
    \begin{array}{ll}
         E_{r_{i-1}}(\mu,s)&=\mathrm{ASpMOR}_{V_i}^{U_i}(H(\mu,s))-\mathrm{ASpMOR}_{I_{\text{\ }}}^{U_i}(H_{r_{i-1}}(\mu,s))\\&=\mathrm{ASpMOR}_{V_i}^{U_i}(H(\mu,s))-H_{r_{i-1}}(U_iU_i^T\mu,s).
    \end{array}
    \label{eq:IASpMOR_general_E}
\end{align}\renewcommand{\arraystretch}{1}Thus, each IAS iteration updates the current ROM by adding a reduced-order approximation of its error system.

Note that approximating $H(\mu, s)$ via iteratively updating its ROM with the ROM of the error system was also proposed in~\cite{morAntBF18}, where no active subspace was considered. With the proposed iterative active subspace method, we show that not only can the projection matrices for pMOR be iteratively constructed, but the active subspace can also be iteratively derived via the error system at each iteration.

Specifically, Table~\ref{table:3_iter} illustrates the expressions of the ROMs generated at the first three iterations.

\begin{small}
   \begin{table}[!htbp]
      \begin{center}
      \renewcommand\arraystretch{1.5}
        \caption{ROMs derived at the first 3 iterations of IAS.}
        \begin{tabular}{cl}
          \hline
          \textbf{\small{Iter}} & \makecell[c]{\textbf{ROMs}}\\
          \hline
          1 &
          \ ${H_{r_1}}(\mu ,s) = {\quad}{\mathrm{ASpMOR}}_{V_1}^{{U_1}}(H(\mu ,s))$\\
          \hline
          2 &
          $\begin{array}{ll}
               {H_{r_2}}(\mu ,s) =& {\mathrm{ASpMOR}}_{V_1}^{{U_1}}(H(\mu ,s)) + {\mathrm{ASpMOR}}_{V_2}^{{U_2}}(H(\mu ,s))\\ &- {\mathrm{ASpMOR}}_{I_{\text{\hspace{1.05ex} }}V_1}^{{U_2}{U_1}}(H(\mu ,s)) 
          \end{array}$\\
          \hline
          3 &
          $\begin{array}{ll}
               {H_{r_3}}(\mu ,s) =& {\mathrm{ASpMOR}}_{V_1}^{{U_1}}(H(\mu ,s)) + {\mathrm{ASpMOR}}_{V_2}^{{U_2}}(H(\mu ,s))\\ &- {\mathrm{ASpMOR}}_{I_{\text{\hspace{1.05ex} }}V_1}^{{U_2}{U_1}}(H(\mu ,s))+{\mathrm{ASpMOR}}_{V_3}^{{U_3}}(H(\mu ,s)) \\&- {\mathrm{ASpMOR}}_{I_{\text{\hspace{1.05ex} }}V_1}^{{U_3}{U_1}}(H(\mu ,s)) - {\mathrm{ASpMOR}}_{I_{\text{\hspace{1.05ex} }}V_2}^{{U_3}{U_2}}(H(\mu ,s))\\&+{\mathrm{ASpMOR}}_{I_{\text{\hspace{1.05ex} }}I_{\text{\hspace{1.05ex} }}V_1}^{{U_3}{U_2}{U_1}}(H(\mu ,s))
          \end{array}$\\
          \hline
        \end{tabular}
        \label{table:3_iter}
      \end{center}\vspace{-3ex}
    \end{table} 
\end{small}
\renewcommand{\arraystretch}{1} 

During this process, we do not have to choose a big active subspace dimension $r_\mu$ at each iteration, which will result in a big ROM size. Instead, only a small $r_\mu$ is needed, and with iteration involved, this process generates a sequence of small sub-ROMs derived from these small active subspaces. For example, at iteration 2 in Table~\ref{table:3_iter}, the single ROM $H_{r_2}(\mu,s)$ is actually composed of 3 sub-ROMs computed from $(U_1,V_1)$, $(U_2,V_2)$, and $(U_1,V_1)$ combined with $(U_2,I)$, respectively. Each of the 3 sub-ROMs is of small size, resulting in a small ROM $H_{r_2}(\mu,s)$. Finally, at the last iteration $N_{iter}$, we obtain the final ROM $H_{r_{N_{iter}}}(\mu,s)$ that is composed of several small sub-ROMs. Computing $H_{r_{N_{iter}}}(\mu,s)$ then reduces to computing the small sub-ROMs. Often, simulating a sequence of small sub-ROMs is cheaper than simulating a big ROM, as the system matrices of the ROMs are dense after projection.
Moreover, the small sub-ROMs generated by this process can be solved in parallel, resulting in even more computational gain.

However, the number of small sub-ROMs generated in this process grows exponentially with the number of iterations. If we can obtain an accurate ROM in a few iterations, the computational gain is still evident. Whereas, if more iterations are required, the exponential increase in the number of sub-ROMs may make this process infeasible.

In the next section, we present an orthogonalization approach that reduces the exponential increase in the number of sub-ROMs to a linear increase, without compromising the accuracy.

\subsection{Orthogonalization between the iterative active subspaces}

The approach proposed in this section is motivated by the following theorems. 
\begin{theorem}
Consider the general IAS process~(\ref{eq:IASpMOR_general}), if \(U_i=U_{i-1}\), s.t. \(V_i=V_{i-1}\), $i=2,\ldots,N_{iter}$, then $E_{r_{i-1}}(\mu,s)=0$.
\label{theo}
\end{theorem}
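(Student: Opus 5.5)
The plan is a direct algebraic computation from the recursive definitions (\ref{eq:IASpMOR_general}) and (\ref{eq:IASpMOR_general_E}), built on one structural fact. Write $P_j := U_jU_j^T$. Since $U_j=W_1$ consists of the leading eigenvectors of the symmetric matrix $\hat C$ in Alg.~\ref{alg:ASpMOR}, it has orthonormal columns, $U_j^TU_j=I$. Hence $P_j$ is an orthogonal projector, $P_j^2=P_j$.

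The first step is an invariance lemma. For any $V$, the reduced transfer function
\[
\mathrm{ASpMOR}_{V}^{U_j}(H)(\mu,s)=C(P_j\mu)V\bigl(V^TQ(P_j\mu,s)V\bigr)^{-1}V^TB(P_j\mu)
\]
depends on $\mu$ only through $P_j\mu$. Therefore $\mathrm{ASpMOR}_{V}^{U_j}(H)(P_j\mu,s)=\mathrm{ASpMOR}_{V}^{U_j}(H)(\mu,s)$ by idempotency. The same holds for any function of the form $g(P_j\mu)$.

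Next I would evaluate $H_{r_{i-1}}(P_i\mu,s)$ under the hypothesis $U_i=U_{i-1}$, so that $P_i=P_{i-1}=:P$.

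\emph{Case $i=2$.} Here $H_{r_1}=\mathrm{ASpMOR}_{V_1}^{U_1}(H)$. The lemma gives $H_{r_1}(P\mu,s)=H_{r_1}(\mu,s)$. Then (\ref{eq:IASpMOR_general_E}) with $V_2=V_1$ yields $E_{r_1}=\mathrm{ASpMOR}_{V_1}^{U_1}(H)-H_{r_1}=0$.

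\emph{Case $i\ge 3$.} Expand $H_{r_{i-1}}=H_{r_{i-2}}+E_{r_{i-2}}$ using (\ref{eq:IASpMOR_general_E}) at index $i-1$, and evaluate at $P\mu$:
\[
H_{r_{i-1}}(P\mu,s)=H_{r_{i-2}}(P\mu,s)+\mathrm{ASpMOR}_{V_{i-1}}^{U_{i-1}}(H)(P\mu,s)-H_{r_{i-2}}(P^2\mu,s).
\]
Since $P^2=P$, the first and last terms cancel. By the lemma, the middle term equals $\mathrm{ASpMOR}_{V_{i-1}}^{U_{i-1}}(H)(\mu,s)$. This identity holds whatever $H_{r_{i-2}}$ is, so no induction over earlier iterates is needed. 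Substituting into (\ref{eq:IASpMOR_general_E}) at index $i$, with $U_i=U_{i-1}$ and $V_i=V_{i-1}$, gives
\[
E_{r_{i-1}}=\mathrm{ASpMOR}_{V_{i-1}}^{U_{i-1}}(H)-\mathrm{ASpMOR}_{V_{i-1}}^{U_{i-1}}(H)=0 .
\]

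The main point requiring care is justifying $U_j^TU_j=I$, which makes $P_j$ idempotent. This follows because Alg.~\ref{alg:ASpMOR} takes $U$ from an orthonormal eigenbasis $W$ of the symmetric matrix $\hat C$. If a non-orthonormal basis of the active subspace were used, the cancellation would fail. A second point is to note that the evaluation $H_{r_{i-2}}(P\mu,s)$ makes sense for an arbitrary composite ROM, since it is just a function of $\mu$. The argument also implicitly assumes that the reduced matrices $V^TQ(P\mu,s)V$ are invertible at the relevant points, as is already assumed for the ROMs to be well defined.
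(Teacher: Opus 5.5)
Your proof is correct and follows essentially the same route as the paper's. Both expand $H_{r_{i-1}}=H_{r_{i-2}}+E_{r_{i-2}}$ one step and use the idempotency of $U_iU_i^T$ (from orthonormal columns), so that the two $H_{r_{i-2}}$ terms cancel and the projected $\mathrm{ASpMOR}_{V_{i-1}}^{U_{i-1}}(H)$ term reproduces $\mathrm{ASpMOR}_{V_i}^{U_i}(H)$. Treating $i=2$ separately is a small improvement, since the paper's one-step expansion at $i=2$ implicitly relies on an $H_{r_0}$ that is only introduced after the theorem.
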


\noindent The proof of Theorem~\ref{theo} is provided in Appendix~\ref{app:proofs}.

\begin{remark}
From Theorem~\ref{theo}, we see that if $U_i=U_{i-1}$ and $V_i=V_{i-1}$, then the {\it reduced} error system $E_{r_{i-1}}(\mu,s)=0$ and there is no update from $H_{r_{i-1}}(\mu,s)$ to $H_{r_i}(\mu,s)$. If $U_i$ and $V_i$ nearly repeat their previous values, the update may also be small. However, a zero or small update does not imply that the {\it original} error system $E_i(\mu,s)$ is small, especially in the early iterations. Therefore, at each iteration, we should try to make $U_i$ as different from $U_{i-1}$ as possible, such that $V_i$ deviates sufficiently from $V_{i-1}$. This will produce an $E_{r_{i-1}}(\mu, s)$ that approximates $E_{i-1}(\mu, s)$ as well as possible, making the updated $H_{r_i}(\mu, s)=H_{r_{i-1}}(\mu, s)+E_{r_{i-1}}(\mu, s)$ approximate $H(\mu, s)$ as accurately as possible. 
\end{remark}

An optimal way of maximizing the additional information contained in $U_i$ as compared to $U_{i-1}$ is to demand $U_i^TU_{i-1}=0$. Following this rule, we have at the second iteration,
\renewcommand{\arraystretch}{1.5} 
\begin{align}
    \begin{array}{lll}
        {H_{r_2}}(\mu ,s)&={\mathrm{ASpMOR}}_{V_1}^{{U_1}}(H(\mu ,s))+{\mathrm{ASpMOR}}_{V_2}^{{U_2}}(H(\mu ,s))
        \\&\hspace{2.4ex} -{\mathrm{ASpMOR}}_{I_{\text{\hspace{1.05ex} }}V_1}^{{U_2}{U_1}}(H(\mu ,s))
        \\&={\mathrm{ASpMOR}}_{V_1}^{{U_1}}(H(\mu ,s))+{\mathrm{ASpMOR}}_{V_2}^{{U_2}}(H(\mu ,s))\\&\hspace{2.4ex}-H_{r_1}(0,s),
    \end{array}
    \label{eq:iter2_Hr2}
\end{align}\renewcommand{\arraystretch}{1} 
since
\renewcommand{\arraystretch}{1.5}\begin{align}
{\begin{array}{lll}
     &{\mathrm{ASpMOR}}_{I_{\text{\hspace{1.05ex} }}V_1}^{{U_2}{U_1}}(H(\mu ,s))
     \vspace{1ex}\\&=C(U_1U_1^TU_2U_2^T\mu)V_1(V_1^TQ(U_1U_1^TU_2U_2^T\mu,s)V_1)^{-1}V_1^TB(U_1U_1^TU_2U_2^T\mu)
     \vspace{1ex}\\&=C(0)V_1(V_1^TQ(0,s)V_1)^{-1}V_1^TB(0)
     \vspace{1ex}\hspace{24.5ex}(U_1^TU_2=0)\\&=H_{r_1}(0,s) .
\end{array}}
\label{eq:iter2_Hr0}
\end{align}\renewcommand{\arraystretch}{1} 

Let $H_{r_0}(\mu,s)=H_{r_0}(0,s)=H_r(0,s)=\mathrm{ASpMOR}^I_{V_0}(H(0,s))$, where $H_r(0,s)$ is a good ROM approximation for the non-parametrized linear system $H(0,s)$, $V_0$ be its reduced basis, and $U_0=0$. Starting from iteration $i\geq 3$, we have the following theorem.

\begin{theorem}
Assume that \(\forall\text{ } l,\text{ } j \leq i-1\hspace{1ex}(i\geq 3)\) with \(l \neq j\), \(U_l^T U_j = 0\). If \(\exists\text{ } g < i-1\), such that \(U_i = U_g\) and \(V_i = V_g\), then \(H_{r_i}(\mu, s) = H_{r_{i-1}}(\mu, s)-H_{r_{i-1}}(0,s)+H_{r_i}(0,s)\).
\label{theo_all}
\end{theorem}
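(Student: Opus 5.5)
Unfolding the recursion (\ref{eq:IASpMOR_general})--(\ref{eq:IASpMOR_general_E}) at step $i$ gives
\begin{align*}
H_{r_i}(\mu,s)=H_{r_{i-1}}(\mu,s)+\mathrm{ASpMOR}_{V_i}^{U_i}(H(\mu,s))-H_{r_{i-1}}(U_iU_i^T\mu,s).
\end{align*}
With $U_i=U_g$ and $V_i=V_g$, the middle term is exactly the first summand $\mathrm{ASpMOR}_{V_g}^{U_g}(H(\mu,s))$ that entered at iteration $g$. The whole task is therefore to evaluate $H_{r_{i-1}}(U_gU_g^T\mu,s)$ using the orthogonality $U_l^TU_j=0$ for $l\neq j\le i-1$, and then to compare the result with $H_{r_i}(0,s)$.

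\textbf{Step 1: a structural lemma.} By induction on $k\ge 1$ (with $H_{r_0}$ and $U_0=0$ as in the convention above), I would show that under pairwise orthogonality of $U_1,\dots,U_k$,
\begin{align*}
H_{r_k}(\mu,s)=\sum_{l=1}^{k}\mathrm{ASpMOR}_{V_l}^{U_l}(H(\mu,s))-c_k(s),
\end{align*}
where $c_k(s)$ is independent of $\mu$. The base case $k=2$ is (\ref{eq:iter2_Hr2}). For the inductive step, insert the formula into $H_{r_{k+1}}=H_{r_k}+\mathrm{ASpMOR}_{V_{k+1}}^{U_{k+1}}(H)-H_{r_k}(U_{k+1}U_{k+1}^T\mu,s)$. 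Since $U_l^TU_{k+1}=0$ for $l\le k$, each summand satisfies
\begin{align*}
\mathrm{ASpMOR}_{V_l}^{U_l}(H)(U_{k+1}U_{k+1}^T\mu,s)=H_{r,l}(0,s),
\end{align*}
where $H_{r,l}(0,s)$ denotes the sub-ROM built from $V_l$ and evaluated at zero parameter; this is the same computation as (\ref{eq:iter2_Hr0}). Hence $H_{r_k}(U_{k+1}U_{k+1}^T\mu,s)=H_{r_k}(0,s)$ is constant in $\mu$, and the form propagates with $c_{k+1}=c_k+H_{r_k}(0,s)$.

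\textbf{Step 2: apply the lemma at step $i$.} Apply it with $k=i-1$, which the hypothesis permits. The orthogonality fails only because $U_i=U_g$, and it is used only at step $i$. Evaluating each summand at $U_gU_g^T\mu$ gives:
\begin{itemize}
\item for $l\ne g$, $U_l^TU_g=0$, so the summand becomes the constant $H_{r,l}(0,s)$;
\item for $l=g$, $U_gU_g^TU_gU_g^T\mu=U_gU_g^T\mu$ because $U_g$ has orthonormal columns (it comes from an eigendecomposition), so the summand is exactly $\mathrm{ASpMOR}_{V_g}^{U_g}(H(\mu,s))$.
\end{itemize}
Therefore $H_{r_{i-1}}(U_gU_g^T\mu,s)=\mathrm{ASpMOR}_{V_g}^{U_g}(H(\mu,s))-R(s)$, where $R(s)$ is independent of $\mu$. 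Substituting, the two copies of $\mathrm{ASpMOR}_{V_g}^{U_g}(H)$ cancel, leaving
\begin{align*}
H_{r_i}(\mu,s)=H_{r_{i-1}}(\mu,s)+R(s).
\end{align*}
Setting $\mu=0$ gives $R(s)=H_{r_i}(0,s)-H_{r_{i-1}}(0,s)$, which is the claimed identity. The argument is the same in spirit as Theorem~\ref{theo}: a repeated active subspace produces no $\mu$-dependent update.

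\textbf{Main obstacle.} The delicate part is the bookkeeping in Step 1. One must check that nested sub-ROMs of the form $\mathrm{ASpMOR}^{U_jU_l}_{I\,V_l}$, as they appear in Table~\ref{table:3_iter}, all collapse to $\mu$-independent constants under orthogonality. One must also confirm that $g<i-1$ (rather than $g=i-1$) is what keeps every term distinct, so that the induction is never applied to a pair of equal subspaces before step $i$. I would carry out Step 1 explicitly at $k=3$ against Table~\ref{table:3_iter} before writing the general induction.
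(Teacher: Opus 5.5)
Your proof is correct and follows essentially the paper's route. The paper also first establishes, under the orthogonality hypothesis, the accumulated form $H_{r_{i-1}}(\mu,s)=\sum_{j=1}^{i-1}\mathrm{ASpMOR}_{V_j}^{U_j}(H(\mu,s))-\sum_{j=1}^{i-2}\mathrm{ASpMOR}_{V_j}^{I}(H(0,s))$, and then lets the $j=g$ sub-ROM cancel the new one at step $i$. It differs only in reaching this form by unrolling nested terms $\mathrm{ASpMOR}^{U_jU_l}_{I\,V_l}$ rather than by your induction, and in fixing the constants through the identity $H_{r_j}(0,s)=\mathrm{ASpMOR}_{V_j}^{I}(H(0,s))$ rather than by evaluating at $\mu=0$.

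With your bookkeeping, both worries in your last paragraph go away. The nested terms never arise in your induction, and only $g\le i-1$ is used, not $g<i-1$. So the same computation also covers $g=i-1$, consistently with Theorem~\ref{theo}.
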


\noindent The proof of Theorem~\ref{theo_all} is provided in Appendix~\ref{app:proofs}.

Thus, if \(\exists\text{ } g < i-1\), \(U_i = U_g\), s.t. \(V_i = V_g\), then $H_{r_i}(\mu, s)=H_{r_{i-1}}(\mu,s)-H_{r_{i-1}}(0,s)+H_{r_i}(0,s)$. This implies that $E_{r_{i-1}}(\mu,s)=-H_{r_{i-1}}(0,s)+H_{r_i}(0,s)$ becomes constant with respect to the parameter $\mu$ (see (\ref{eq:IASpMOR_general})).

Moreover, if the snapshots at the samples $(\mu=0, s_j), j=1,\ldots,m,$ are included to construct $V_i, i=1,\ldots, N_{iter},$ at each iteration, then $H_{r_i}(\mu,s), i=1,\ldots, N_{iter},$ has similar accuracy at $\mu=0$, since all $H_{r_i}(\mu,s), i=1,\ldots, N_{iter},$ interpolate the original transfer function $H(\mu, s)$ at the same samples $(\mu=0, s_j), j=1,\ldots, m$, i.e., 
$$H(0, s_j)=H_{r_i}(0, s_j), i=1,\ldots, N_{iter}, j=1, \ldots, m.$$ 

Once more, they will have similar accuracy as a ROM of $H(0,s)$ obtained from the same frequency samples $s_j, j=1,\ldots,m$, since the same interpolation conditions are satisfied, i.e., 

\begin{align}
    H(0, s_j)=H_{r_i}(0, s_j)=H_r(0, s_j),~i=1,\ldots, N_{iter},~j=1, \ldots, m.
    \label{eq:H_r_0}
\end{align}

As a result, we have $E_{r_{i-1}}(\mu,s)=-H_{r_{i-1}}(0,s)+H_{r_i}(0,s)\approx 0$, and there will be no (or ignorable) updates from $H_{r_{i-1}}(\mu,s)$ to $H_{r_{i}}(\mu,s)$. Therefore, at each iteration ($i \geq 3$), we should further try to make $U_i$ as different from all $U_{j}$ ( $j<i$) as possible, such that $V_i$ is largely different from all $V_{j}$. This may produce an $E_{r_{i-1}}(\mu, s)$ that approximates $E_{i-1}(\mu, s)$ as well as possible.

An optimal way of making $U_i$ as different from all $U_{j}$ ($j<i$) as possible is to have \(U_i\) further orthogonalized against the columns in \(U_1,\ldots,U_{i-1}\), i.e. \({U_i}^TU_j=0,j=1,\ldots,i-1\), via, e.g., the modified Gram-Schmidt process (MGS). Consequently, we can get a much more concise expression of \(H_{r_i}(\mu,s)\) from (\ref{eq:accumulative_IAS_pMOR_half}):
\renewcommand{\arraystretch}{1.5}\begin{align}
    \begin{array}{ll}
         H_{r_i}(\mu,s)&=H_{r_{i-1}}(\mu,s)+\mathrm{ASpMOR}_{V_{i}}^{U_{i}}(H(\mu,s))-H_{r_{i-1}}(0,s)\\&=H_{r_0}(\mu,s)+\sum\limits_{j=1}\limits^{i}\mathrm{ASpMOR}_{V_{j}}^{U_{j}}(H(\mu,s))-\sum\limits_{j=0}\limits^{i-1} H_{r_j}(0,s),~i\geq1.
    \end{array}
    \label{eq:IASpMOR_general_orth}
\end{align}\renewcommand{\arraystretch}{1}

Furthermore, according to (\ref{eq:H_r_0}), we can approximate each $H_{r_{i}}(0,s)$, $i=1,2,\ldots$, as $H_{r}(0,s)$. Then (\ref{eq:IASpMOR_general_orth}) can be simplified to

\renewcommand{\arraystretch}{1.5}\begin{align}
    \begin{array}{ll}
         H_{r_i}(\mu,s)&=H_{r_{i-1}}(\mu,s)+\mathrm{ASpMOR}_{V_{i}}^{U_{i}}(H(\mu,s))-H_{r_{i-1}}(0,s)\\&\approx
       H_{r_{i-1}}(\mu,s)+\mathrm{ASpMOR}_{V_i}^{U_i}(H(\mu,s))-H_{r}(0,s) \\&=\sum\limits_{j=1}\limits^{i}\mathrm{ASpMOR}_{V_{j}}^{U_{j}}(H(\mu,s))-(i-1)\cdot H_{r}(0,s), ~~~~~~~~~~~i\geq1.
    \end{array}
    \label{eq:IASpMOR_general_orth_hr}
\end{align}\renewcommand{\arraystretch}{1}

Since $H(0,s)$ is a linear system without parameters, this ROM $H_r(0,s)$ can be easily obtained by model order reduction methods, such as the multi-moment-matching method~\cite{morFenAB17}, iterative rational Krylov methods~\cite{morGugAB08}, the structure-preserving interpolatory method for second-order systems~\cite{morBeaG09}, the proper orthogonal decomposition (POD) method~\cite{morBenGQetal21a}, etc. Without sampling in the parameter domain, the size of $H_r(0,s)$ can be much smaller than the sizes of the sub-ROMs $\mathrm{ASpMOR}_{V_j}^{U_j}(H(\mu,s))$ $(j=1,\ldots,i)$ in the expression of $H_{r_i}(\mu,s)$. Thus, variations in the size of $H_r(0,s)$ resulting from the choice of reduction method have only a minor effect on the overall ROM size. In this work, we use the POD method to construct $H_r(0,s)$.

In each iteration $i$, a new sub-ROM $\mathrm{ASpMOR}_{V_i}^{U_i}(H(\mu,s))$ is added to $H_{r_{i-1}}(\mu,s)$. The time domain expression of $\mathrm{ASpMOR}_{V_i}^{U_i}(H(\mu,s))$ is similar to (\ref{equ:ASpMOR_pLTI_sec_order}), (\ref{equ:ASpMOR_pLTI}) and (\ref{equ:As_pLIT_std_ROM}), which is
\begin{align}
    \begin{array}{rl}
        M_{r_i}(\mu_{r_i})\ddot x_{r_i}(\mu_{r_i},t)+D_{r_i}(\mu_{r_i})\dot x_{r_i}(\mu_{r_i},t)+K_{r_i}(\mu_{r_i}) x_{r_i}(\mu_{r_i},t)&=B_{r_i}(\mu_{r_i})u(t),\\
        y_{r_i}(\mu_{r_i},t)&=C_{r_i}(\mu_{r_i})x_{r_i}(\mu_{r_i},t),
    \end{array}
    \label{equ:sub_IASpMOR_pLTI_sec_order}
\end{align} for the second-order system (\ref{eq:second_order_pLTI}), where $\mu_{r_i}={U_i}^T\mu,~M_{r_i}(\mu_{r_i})=V_i^TM({U_i}\mu_{r_i})V_i$, $D_{r_i}(\mu_{r_i})=V_i^TD({U_i}\mu_{r_i})V_i$, $K_{r_i}(\mu_{r_i})=V_i^TK({U_i}\mu_{r_i})V_i$, $B_{r_i}(\mu_{r_i})=V_i^TB({U_i}\mu_{r_i})$, $C_{r_i}(\mu_{r_i})=C({U_i}\mu_{r_i})V_i$, or
\begin{align}
    \begin{array}{rl}
        G_{r_i}(\mu_{r_i})\dot x_{r_i}(\mu_{r_i},t) &= A_{r_i}(\mu_{r_i} )x_{r_i}(\mu_{r_i},t) + B_{r_i}(\mu_{r_i} )u(t),\\
        y_{r_i}(\mu_{r_i},t) &= C_{r_i}(\mu_{r_i})x_{r_i}(\mu_{r_i},t),
    \end{array}
    \label{equ:sub_IASpMOR_pLTI}
\end{align}
for the first-order system (\ref{equ:pLTI}), where $G_{r_i}(\mu_{r_i})=V_i^TG({U_i}\mu_{r_i})V_i$, $A_{r_i}(\mu_{r_i})=V_i^TA({U_i}\mu_{r_i})V_i$, or
\begin{align}
    \begin{array}{rl}
         K_{r_i}(\mu_{r_i}) x_{r_i}(\mu_{r_i})&=B_{r_i}(\mu_{r_i}),  \\
         y(\mu_{r_i})&=C_{r_i}(\mu_{r_i})x_{r_i}(\mu_{r_i}),
    \end{array}
    \label{equ:IAs_pLIT_std_ROM}
\end{align}
for the static system (\ref{equ:std_sys}).

According to (\ref{eq:IASpMOR_general_orth}) and (\ref{eq:IASpMOR_general_orth_hr}), the final estimated solution $\hat y_i(\mu,t)$ (or $\hat y_i(\mu)$)
at the $i$-th iteration of our method, can be derived as
\begin{align}
    \hat y_i(\mu,t)&=\sum_{j=1}\limits^{i}y_{r_j}(\mu_{r_j},t)-\sum_{j=1}\limits^{i-1} y_{r_j}(0,t)\\&\approx\sum_{j=1}\limits^{i}y_{r_j}(\mu_{r_j},t)-(i-1)\cdot y_r(0,t),
\end{align}
or
\begin{align}
    \hat y_i(\mu)&=\sum_{j=1}\limits^{i}y_{r_j}(\mu_{r_j})-\sum_{j=1}\limits^{i-1} y_{r_j}(0)\\&\approx\sum_{j=1}\limits^{i}y_{r_j}(\mu_{r_j})-(i-1)\cdot y_r(0),
\end{align}
where $y_r(0,t)$ and $y_r(0)$ are the time domain dynamic and static solutions of $H_r(0,s)$, respectively.

After orthogonalization (\ref{eq:IASpMOR_general_orth}) and further simplification (\ref{eq:IASpMOR_general_orth_hr}), the reduced transfer functions in Table~\ref{table:3_iter} can be rewritten into those in Table~\ref{table:3_iter_orth}. With orthogonalization between the matrices of active subspaces and reasonable approximation, only one new sub-ROM is generated at each iteration. Thus, the effectiveness of this iterative active subspace method is greatly improved when many iterations are needed to achieve a satisfactory accuracy.

\renewcommand{\arraystretch}{1}
\begin{small}
   \begin{table}[!htbp]
      \begin{center}
      \renewcommand\arraystretch{1.5}
        \caption{ROMs at the first 3 iterations of IAS with orthogonalization between $U_i$ }
        \begin{tabular}{cl}
          \hline
          \textbf{\small{Iter}} & \makecell[c]{\textbf{ROMs}}\\
          \hline
          1 &
          \ ${H_{r_1}}(\mu ,s) ={\quad}{\mathrm{ASpMOR}}_{V_1}^{{U_1}}(H(\mu ,s))$\\
          \hline
          2 &
          $\begin{array}{ll}
               {H_{r_2}}(\mu ,s) =& {\mathrm{ASpMOR}}_{V_1}^{{U_1}}(H(\mu ,s)) + {\mathrm{ASpMOR}}_{V_2}^{{U_2}}(H(\mu ,s))\\ &- H_r(0,s) 
          \end{array}$\\
          \hline
          3 &
          $\begin{array}{ll}
               {H_{r_3}}(\mu ,s) =& {\mathrm{ASpMOR}}_{V_1}^{{U_1}}(H(\mu ,s)) + {\mathrm{ASpMOR}}_{V_2}^{{U_2}}(H(\mu ,s))\\ &+{\mathrm{ASpMOR}}_{V_3}^{{U_3}}(H(\mu ,s)) \\&- 2H_r(0,s)
          \end{array}$\\
          \hline
        \end{tabular}\vspace{-4ex}
        \label{table:3_iter_orth}
      \end{center}
    \end{table} 
\end{small}
\renewcommand{\arraystretch}{1}

\subsection{Adaptive active subspace dimension decision and stopping criteria}

To make the algorithm more flexible, we choose the active subspace dimension
$r_\mu$ adaptively at each iteration. Let
$\lambda_1\geq\lambda_2\geq\cdots\geq\lambda_{n_\mu}\geq0$
denote the eigenvalues, in descending order, of the covariance matrix $\hat C$
constructed from the gradients of the current error system. Given a prescribed
energy ratio $0<\alpha_\mu<1$, $r_\mu$ is selected such that
\begin{align}
    \frac{\sum_{l=1}^{r_\mu}\lambda_l}
    {\sum_{k=1}^{n_\mu}\lambda_k}
    <\alpha_\mu<
    \frac{\sum_{l=1}^{r_\mu+1}\lambda_l}
    {\sum_{k=1}^{n_\mu}\lambda_k}.
    \label{eq:energy_crit}
\end{align}

Furthermore, at the i-th iteration, since $H(\mu, s)-H_{r_i}(\mu, s)=E_i(\mu, s)\approx E_{r_i}(\mu, s)$, when $E_{r_i}(\mu, s)$ approximates $E_i(\mu, s)$ sufficiently well, we can use $E_{r_i}(\mu,s)$ as an estimator for the error between $H_{r_i}(\mu,s)$ and $H(\mu,s)$, which can also be used as the stopping criterion of the proposed pMOR algorithm with IAS.

The overall algorithm of our iterative active subspace approach for pMOR is shown in Algorithm~\ref{alg:IASpMOR}.

\subsection{Post processing}

In Algorithm~\ref{alg:IASpMOR}, the size of the ROMs generated in each iteration, denoted as $r_i$, satisfies
\begin{equation*}
    r_i = \text{rank}(\text{orth}([V_{1,i},\ldots,V_{M_V,i}]))\leq M_V.
\end{equation*}
To ensure stable error reduction during iterations, we recommend selecting $M_V$ as a relatively large value, e.g., adaptively set $M_V= \alpha \cdot{r_{\mu,i}}^2$ in each iteration ($\alpha $ is a user-defined scaling constant). After achieving satisfactory accuracy through $N_{iter}$ iterations of the IAS-pMOR process, we can effectively truncate the reduced basis $V_i, i=1,..,N_{iter},$ in a simple way while maintaining accuracy (with active subspaces $U_1$ to $U_{N_{iter}}$ remaining unchanged). 

This truncation is feasible because the intentionally oversized $M_V$ during iterations ensures sufficient ROM size to maintain minimal approximation error between ${\mathrm{ASpMOR}}_{V_i}^{{U_i}}(H(\mu ,s))$ and $H(U_iU_i^T\mu,s)$, which is crucial for computing the next $U_{i+1}$. Specifically, in Step 10 of Algorithm~\ref{alg:IASpMOR}, we perform singular value decomposition (SVD) for the orthogonalization process (\(V_i=\mathrm{orth}([V_{1,i},\ldots,V_{M_V,i}])\)): $V_i\Sigma_iR_i = \text{svd}([V_{1,i},\ldots,V_{M_V,i}])$. During post-processing, we can simply retain only the first $\beta\cdot r_{\mu,i}$ columns of each $V_i$ ($\beta $ is a user-defined constant), so that $r_i=\beta\cdot r_{\mu,i}$. Experimental results demonstrate that this column truncation strategy significantly reduces the ROMs size with negligible accuracy degradation.

\begin{small}

\begin{algorithm}[!htbp]
\caption{Iterative Active Subspace approach for parametric Model Order Reduction (IAS-pMOR)}\label{alg:IASpMOR}
\textbf{Data:} Transfer function of the original system \\\hspace{7.5ex}\(H(\mu,s)=C(\mu,s)Q(\mu,s)^{-1}B(\mu,s)\in\mathbb{R}^{n_O\times n_I}\),\\
    \hspace{7.5ex}Energy ratio $\alpha_\mu$ for active subspace dimension selection,\\
    \hspace{7.5ex}Sampling number $M$ for active subspace extraction,\\
    \hspace{7.5ex}$tol<1$ the error tolerance.\\
\textbf{Result:} Reduced-order model $H_r(\mu,s)$.\\
\textbf{Initialization:} Iteration number $i=0$, \(error=Inf\),\\
\hspace{7.2em}Compute $H_r(0,s)=\text{POD}(H(0,s))$,\\
 \hspace{7.2em}$H_{r_0}(\mu,s)=H_r(0,s)$, $U_0=0$,\\
\hspace{7.2em}Draw \( M \) samples \( \{(\mu^j,s^j)\}, j=1,\ldots,M, \mu^j \in \mathbb{R}^{n_\mu}. \)\\
\While{$error>tol$}{

    1. Iterate: ${i}={i}+1$,
     
     \hspace{11ex}$E_{{i-1}}(\mu,s)=H(\mu,s)-H_{r_{i-1}}(\mu,s)$,~ $E(\mu,s)=E_{i-1}(\mu,s)$.

    2. Let $E_{o,\iota}(\mu, s)$ be the $o,\iota$-th entry of $E(\mu, s)$,
    
    \hspace{3ex}$o=1,\ldots,n_O, \iota=1,\ldots,n_I$.
    
    \hspace{3ex}For each \( (\mu^j,s^j),j=1,\ldots, M\), compute:

    \hspace{3ex}$\nabla_\mu E_{o,\iota}(\mu^j, s^j) \in \mathbb R^{n_\mu}$, which is the gradient of $E_{o,\iota}(\mu,s)$ 
    
    \hspace{3ex} w.r.t. $\mu$ and evaluated at $(\mu^j,s^j)$ . 
    
    \hspace{3ex} Let $\mathcal{E}^j \in \mathbb R^{{n_O} \times {n_I} \times {n_\mu}}$ be a 3D tensor, its $o,\iota$-th tube fiber is:
    
    \hspace{3ex} $\mathcal {E}^j(o, \iota, :):=\mathrm{reshape}(\nabla_\mu E_{o,\iota}(\mu^j, s^j), 1,1,n_\mu) \in \mathbb R^{1\times 1 \times {n_\mu}}$.

    3. Approximate \( \hat{C} = \frac{1}{M} \sum_{j=1}^{M} \hat{C}^j  \in \mathbb R^{n_\mu \times n_\mu} \),

    \hspace{3ex} \({\hat{C}^j}={\xi^j}^T\cdot \xi^j \),~where $\xi^j:=\mathrm{reshape}(\mathcal E^j,\texttt{[~]},1)\in \mathbb{R}^{n_O\cdot n_I\times n_\mu}$.
    
    4. Compute the eigendecomposition \( \hat C = W\Lambda W^T \), $W \in \mathbb R^{n_\mu \times n_\mu}$.

    5. Choose $r_{\mu,i}$, $\frac{\sum_{l=1}^{r_{\mu,i}}\lambda_l}{\sum_{k=1}^{n_\mu}\lambda_k}<\alpha_\mu<\frac{\sum_{l=1}^{r_{\mu,i}+1}\lambda_l}{\sum_{k=1}^{n_\mu}\lambda_k}$, 
    
    \hspace{3ex}$\Lambda=\operatorname{diag}(\lambda_1,\ldots,\lambda_{n_\mu}),~\lambda_1\geq\lambda_2\geq\ldots\geq\lambda_{n_\mu}$.

    6. $\hat{U}_{i}=W_1, W=[W_1,W_2], W_1 \in \mathbb R^{n_\mu \times r_{\mu,i}}$.

    7. Orthogonalize \(\hat{U_i}\) against $[U_0,\ldots,U_{i-1}]$ to get $U_i$, via, e.g., MGS.

    8. Draw $M_V$ (e.g., $M_V={r_{\mu,i}}^2$) samples of $(\mu_{r_i}^k,s^k)$,
    
    \hspace{3ex}$k=1,\ldots,M_V$, $ \mu_{r_i}^k\in \mathbb{R}^{r_{\mu,i}}$. 

    9. Compute the local reduced basis at the $i$-th iteration: 
    
    \hspace{3ex}\(V_{k,i}=(s^kG(U_i\mu_{r_i}^k)-A(U_i\mu_{r_i}^k))^{-1}B(U_i\mu_{r_i}^k)\).

    \hspace{-1ex}10. Compute the global projection matrix: \(V_i=\mathrm{orth}([V_{1,i},\ldots,V_{M_V,i}])\).

    \hspace{-1ex}11. Construct the ROM:
    
    \vspace{0.5ex}\begin{small}
        \(
        \hspace{2ex}\begin{array}{lll}
            H_{r_{i}}(\mu,s)&= E_{r_{i-1}}(\mu,s)+H_{r_{i-1}}(\mu,s)\\
            &=C(U_{i}{U_{i}}^T\mu)V_{i}(V_{i}^TQ(U_{i}{U_{i}}^T\mu,s)V_{i})^{-1}V_{i}^TB(U_{i}{U_{i}}^T\mu)\\
            &~~~-H_{r_{i-1}}(0,s)+H_{r_{{i-1}}}(\mu,s).
        \end{array}
        \)
    \end{small}

    \hspace{-1ex}12. Estimate the ROM error:
    
    \hspace{3ex}Draw L samples  \( \{(\mu_l,s_l)\}, l=1,\ldots,L, \mu_l \in \mathbb{R}^{n_\mu} \),
    
    \hspace{3.25ex}\(error=\frac{\Sigma_l|| E_{r_{i-1}}(\mu_l,s_l)||_F}{\Sigma_l||H_{r_{i}}(\mu_l,s_l)||_F}\).
}
\end{algorithm}
    
\end{small}

\subsection{IAS with accelerated gradient computation} \label{subsec:gradient_acceleration}
The computational bottleneck of the IAS algorithm (Algorithm~\ref{alg:IASpMOR}) lies in Step~2, where gradients must be computed repeatedly for all entries of the updated error function \( E_i(\mu, s) \) in every iteration. To clarify this process, let \( e_i(\mu, s) \) denote an arbitrary entry of \( E_i(\mu, s) \). Similarly, define the corresponding entries of the reduced-order error \( E_{r_i}(\mu, s) \), full-model function \( H(\mu, s) \), and reduced-model function \( H_{r_i}(\mu, s) \) as \( e_{r_i}(\mu, s) \), \( h(\mu, s) \), and \( h_{r_i}(\mu, s) \), respectively.  We propose an efficient recursive computation strategy that significantly reduces this overhead through the following insight:
\begin{align}
    \nabla_\mu e_i(\mu,s) = 
    \begin{cases}
        \nabla_\mu h(\mu,s), & i = 0, \\
         \nabla_\mu h(\mu ,s) - \nabla_\mu {h_{{r_i}}}(\mu ,s), & i \geq 1.
    \end{cases}
    \label{eq:recursive_gradient}
\end{align}

This relationship reveals that after the initial iteration, we only need to compute the gradient of the reduced-order model (ROM) $\nabla h_{r_{i}}(\mu,s)$. Since $h_{r_{i}}(\mu,s)$ operates in a reduced-order space, its gradient computation becomes substantially cheaper compared to evaluating the gradient $\nabla_\mu h(\mu,s)$ of the FOM in (\ref{eq:recursive_gradient}).

Moreover, from (\ref{eq:IASpMOR_general}), we have that for $i\geq1$,
\begin{align}
\hspace{-2ex}\begin{array}{ll}
     \nabla_\mu e_i(\mu,s) &=\nabla_\mu h(\mu ,s) - \nabla_\mu {h_{{r_i}}}(\mu ,s) \\&= \nabla_\mu h(\mu ,s) - (\nabla_\mu {e_{{r_{i - 1}}}}(\mu ,s) + \nabla_\mu {h_{{r_{i - 1}}}}(\mu ,s))\\
     &= \nabla_\mu {e_{i - 1}}(\mu ,s) - \nabla_\mu {e_{{r_{i - 1}}}}(\mu ,s).
\end{array}
\label{eq:lazy_gradient}
\end{align}

According to this recursive relationship, instead of computing $\nabla h_{r_{i}}(\mu,s)$, we only need to compute $\nabla e_{r_{i-1}}(\mu,s)$. The expression in (\ref{eq:IASpMOR_general_orth}) shows that $H_{r_i}(\mu,s)=H_{r_0}(\mu,s)+\sum\limits_{j=1}\limits^{i}\mathrm{ASpMOR}_{V_{j}}^{U_{j}}(H(\mu,s))-\sum\limits_{j=0}\limits^{i-1} H_{r_j}(0,s)$. Thus, in the $i-$th iteration, computing the gradient $h_{r_i}(\mu,s)$ includes computing the gradient of $i$ sub-ROMs, while $E_{r_{i-1}}(\mu,s)=H_{r_i}(\mu,s)-H_{r_{i-1}}(\mu,s)=\mathrm{ASpMOR}_{V_{i}}^{U_{i}}(H(\mu,s))-H_{r_{i-1}}(0,s)$ reveals that computing $\nabla e_{r_{i-1}}(\mu,s)$ only requires computing the gradient of one sub-ROM. As a result, computing $\nabla e_{r_{i-1}}(\mu,s)$ is much cheaper than computing $\nabla h_{r_{i}}(\mu,s)$, especially when the iteration number $i$ is large.

Through this optimization, the offline computation time for IAS becomes comparable to that of the standard AS algorithm (Algorithm~\ref{alg:ASpMOR}), while retaining its enhanced approximation capability.

\FloatBarrier
\section{Numerical experiments}

In this section, the proposed iterative active subspace method is validated using two mechanical models with numerous parameters.

To evaluate the accuracy of parametric reduced-order models, we adopt the following error metric:
\begin{align}
    \varepsilon &= \frac{\sum_{l=1}^m \|\hat{X}(\mu^l) - X(\mu^l)\|_F}{\sum_{l=1}^m \|X(0) - X(\mu^l)\|_F},
\label{equ:error}
\end{align}
where \( m \) denotes the number of parameter samples and \( \mu^l \) represents a sampled parameter. For time-dependent problems, \( \hat{X}(\mu)=[\hat x(\mu,t_1),\ldots,\) \(\hat x(\mu,t_{\tau})]\in \mathbb{R}^{n\times\tau} \) includes the approximate solutions computed from the ROM and \( X(\mu)=[x(\mu,t_1),\ldots,x(\mu,t_{\tau})]\in \mathbb{R}^{n\times\tau} \) denotes the full-order model (FOM) solutions (both over the same time grid \([t_1,\ldots,t_\tau]\)). For steady-state problems, \( \hat{X}(\mu)\) is the approximate solution obtained from the ROM and \( X(\mu)=x(\mu)\) in~(\ref{equ:std_sys}). The ROM here may refer to either of the three cases: the single ROM from pMOR without AS, from AS-pMOR (Alg.~\ref{alg:ASpMOR}) , or the IAS (Alg.~\ref{alg:IASpMOR}) produced ROM. \( X(0 )\) specifies the FOM reference solution at a nominal parameter value (e.g., \( \mu=0 \)).

Unlike the conventional relative error metrics that normalize the absolute error by the magnitude of $X(\mu)$, our criterion quantifies the error relative to the deviation of the solution $X(\mu)$ from the baseline solution $X(0)$. This design avoids undervaluing the error when \( X(\mu^l) \) exhibits small variations compared to its mean over the $m$ samples, thereby providing a stricter and more physically meaningful measure of the ROM error.

\subsection{Magnetic Actuator}
MEMS electromagnetic actuators are a common type of MEMS actuator, mainly used in Micro-Opto-Electro-Mechanical Systems (MOEMS), such as super-resolution imaging, Lidar, etc. They have the advantages of small size, light weight, and easy integration. Figure~\ref{fig:MEMS_act_para} illustrates the structural layout of a typical 2D in-plane electromagnetic actuator (developed at Tsinghua University~\cite{WanFLetal24}), on which we defined 25 geometric parameters $\mu_{1}{\sim}\mu_{25}$, covering almost all the dimensions of the beams and mass blocks.

\begin{figure}[!htbp]
    \centering
    \includegraphics[width=0.6\linewidth]{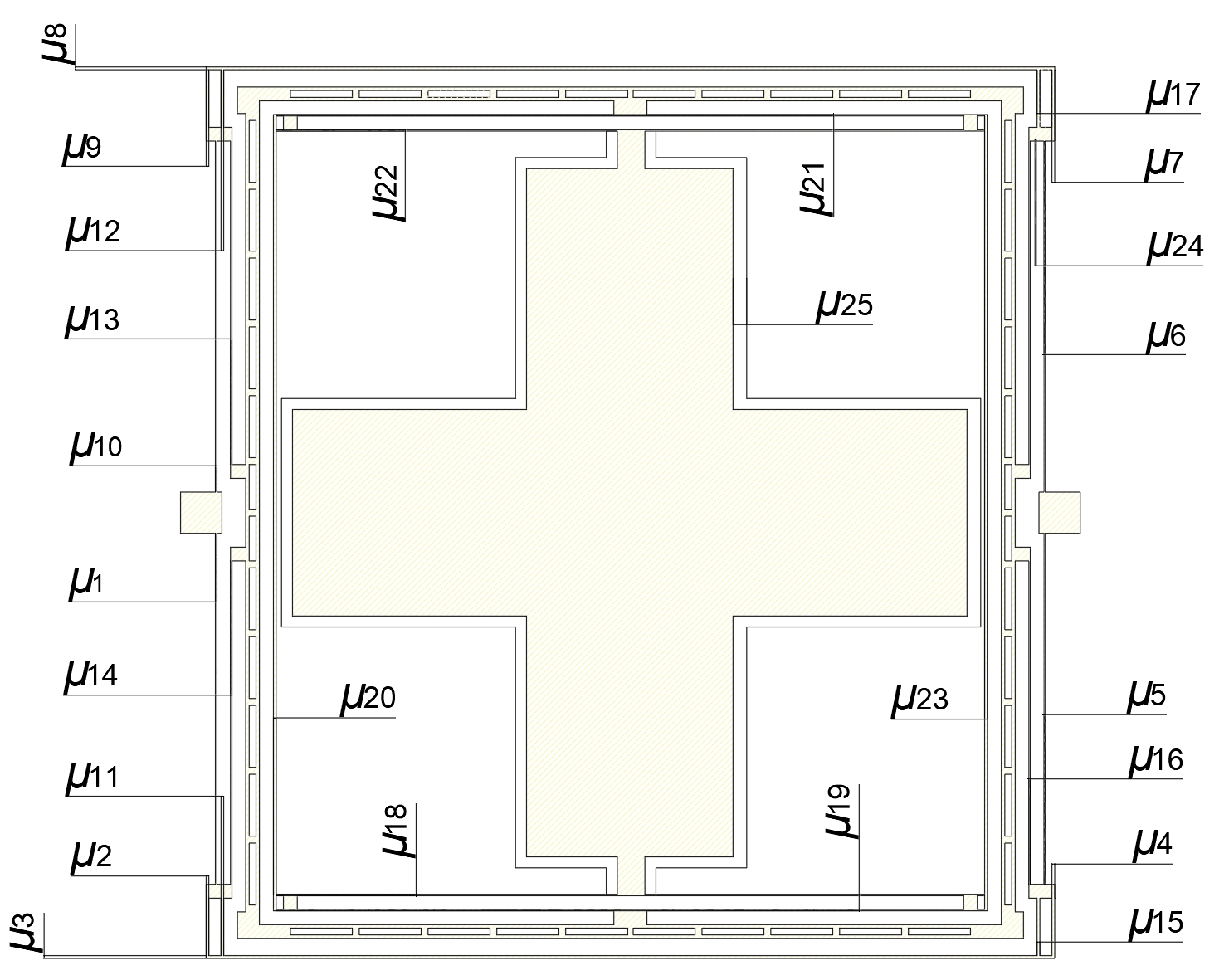}
    \caption{2D model of the magnetic actuator defining 25 parameters $\mu_{1}{\sim}\mu_{25}$.}
    \label{fig:MEMS_act_para}
\end{figure}

We aim at modeling the MEMS actuator performance deviations using the proposed iterative active subspace pMOR method (Alg.~\ref{alg:IASpMOR}). In particular, we are interested in the static displacement of all the mesh points (steady-state solution) under the same magnetic force corresponding to different parameter variances in the 25-dimensional parameter space~\cite{WanFLetal24}. 

The static displacement $x(\mu)$ of this structure can be derived by solving the steady parametric system in~(\ref{equ:std_sys}) with $n=21,914$, $ x(\mu) \in \mathbb R^{n}$, $n_\mu=25$, $\mu=[\mu_1,\mu_2,\ldots,\mu_{n_\mu}]^T\in \mathbb R^{n_\mu}$.

The derivatives of the solution $x(\mu)$, w.r.t. the $k$-th parameter $\mu_k$ of $\mu$ can be written as:
\begin{align}
    \begin{array}{ll}
         &\frac{\partial x(\mu)}{\partial \mu_k}=K(\mu)^{-1}\frac{\partial B(\mu)}{\partial \mu_k}-K(\mu)^{-1}\frac{\partial K(\mu)}{\partial \mu_k}K(\mu)^{-1}B(\mu)\in \mathbb R^{n},
    \end{array}
    \label{eq:derivative_static}
\end{align}
such that its derivatives w.r.t. all the parameters $\mu_k, k=1,\ldots, n_\mu=25,$ can be written into a Jacobian matrix:
\begin{align}
         \nabla_{\mu}x(\mu)=[\frac{\partial x(\mu)}{\partial \mu_1},\frac{\partial x(\mu)}{\partial \mu_2},\ldots,\frac{\partial x(\mu)}{\partial \mu_{25}}] \in \mathbb R^{n\times n_\mu}.
         \label{eq:Jacobian_static}
\end{align}
$M=50$ sampling points $\mu^1, \ldots, \mu^M$ are chosen for computing the active subspace. It is not difficult to verify that $\hat C$ in step 3 of Alg.~\ref{alg:ASpMOR} can be computed as:
\begin{align}
    \hat{C} = \frac{1}{M} \sum_{j=1}^{M} (\nabla_{\mu}x(\mu^j))^T(\nabla_{\mu}x(\mu^j)) \in \mathbb{R}^{n_\mu \times n_\mu}.
\end{align}

For step 3 in Alg.~\ref{alg:IASpMOR}, $E_i(\mu)=x(\mu)-\hat x(\mu)$, where $\hat x(\mu)$ is the approximate solution at the $i$-th iteration, and
\begin{align}
    \nabla_\mu E_i(\mu)=[\frac{\partial x(\mu)}{\partial \mu_1}-\frac{\partial \hat x(\mu)}{\partial \mu_1},\frac{\partial x(\mu)}{\partial \mu_2}-\frac{\partial \hat x(\mu)}{\partial \mu_2},\ldots,\frac{\partial x(\mu)}{\partial \mu_{25}}-\frac{\partial \hat x(\mu)}{\partial \mu_{25}}] \in \mathbb R^{n\times n_\mu}.
\end{align}
We can compute $\hat C$ in a similar way as
\begin{align}
    \hat{C} = \frac{1}{M} \sum_{j=1}^{M} (\nabla_{\mu}E_i(\mu^j))^T(\nabla_{\mu}E_i(\mu^j))\in \mathbb{R}^{n_\mu \times n_\mu}.
\end{align}

Within iterations, we apply the proposed adaptive active subspace technique from Section 3.3 to adaptively decide the active subspace dimension $r_{\mu,i}$ at each iteration. Here, the energy ratio is set as $\alpha_\mu=0.9$. The projection basis $V_i$ (step 9 in Alg.~\ref{alg:IASpMOR}) is derived with $M_{V} = r_{\mu,i}^2$. For this example, the column-space dimension of $V_i$ is further truncated to $r_i = 2 \cdot r_{\mu,i}$ in the post-processing phase for Alg.~\ref{alg:IASpMOR}; the resulting accuracy is reported below.

The standard active subspace method (Alg.~\ref{alg:ASpMOR}) allows parametric exploration through different combinations of the active subspace dimension \(r_\mu\) and ROM size \(r\). Figure~\ref{fig:magact_25_UVError} illustrates the error distribution over different \((r_\mu,r)\) combinations. We can see that when a smaller active subspace dimension $r_\mu$ is selected, the error rapidly decreases as the ROM size $r$ increases, but quickly plateaus and shows no further reduction even with significantly larger $r$. In contrast, choosing a larger active subspace dimension $r_\mu$ leads to a slower error decay with increasing ROM size $r$, yet ultimately achieves a lower error due to the enhanced representation capacity of the subspace. For each fixed ROM size \(r\), the approximation error exhibits a characteristic pattern when varying \(r_\mu\): it initially decreases as \(r_\mu\) increases, attaining a minimum error at a specific \(r_\mu\), and then rises with further increases of \(r_\mu\). The dashed line in Figure~\ref{fig:magact_25_UVError} indicates the minimal errors produced by the active subspaces corresponding to every fixed ROM size \(r\). 

As the system matrices of ROMs are usually dense matrices, the computational complexity increases cubically with ROM size. Therefore, solving $N_{ROM}$ ROMs with size $r_l,~l=1,\ldots,N_{ROM},$ is equivalent to solving one ROM with an equivalent size $r_{eq}=(\sum_{l=1}^{N_{ROM}}(r_l^3))^{1/3}$.

From Figure~\ref{fig:ias_decrease_25} (right), we can see that while the standard active subspace method (Alg.~\ref{alg:ASpMOR}) shows great advantage compared to the snapshot method (mathematically equivalent to the standard active subspace method with \(r_\mu = n_\mu\)) where the ROM sizes $r$ are small, this advantage diminishes with the increment of $r$.

Instead, our iterative active subspace method (Alg.~\ref{alg:IASpMOR}) achieves the same precision with an equivalent ROM size $r_{eq}$ much smaller than the size of the ROM computed from the standard active subspace method with a single active subspace. Moreover, the small ROMs obtained through the IAS method can be solved in parallel, leading to even greater computational gains. 

Table~\ref{table:mag_static} presents the computation time for computing the steady-state solution of the ROMs obtained through the AS and IAS methods at a randomly selected parameter sample $\mu^*=[\mu_1^*, \ldots, \mu_{25}^*]^T$. When the ROMs from the IAS method are solved in parallel, the computation time is significantly reduced compared to the AS method. Compared to the runtime of directly simulating the original steady system with a sparse solver, such as the preconditioned conjugate gradient method (PCG), the IAS method is even faster. 

\FloatBarrier
\begin{figure}[!htbp]
    \centering
    \includegraphics[width=0.68\linewidth]{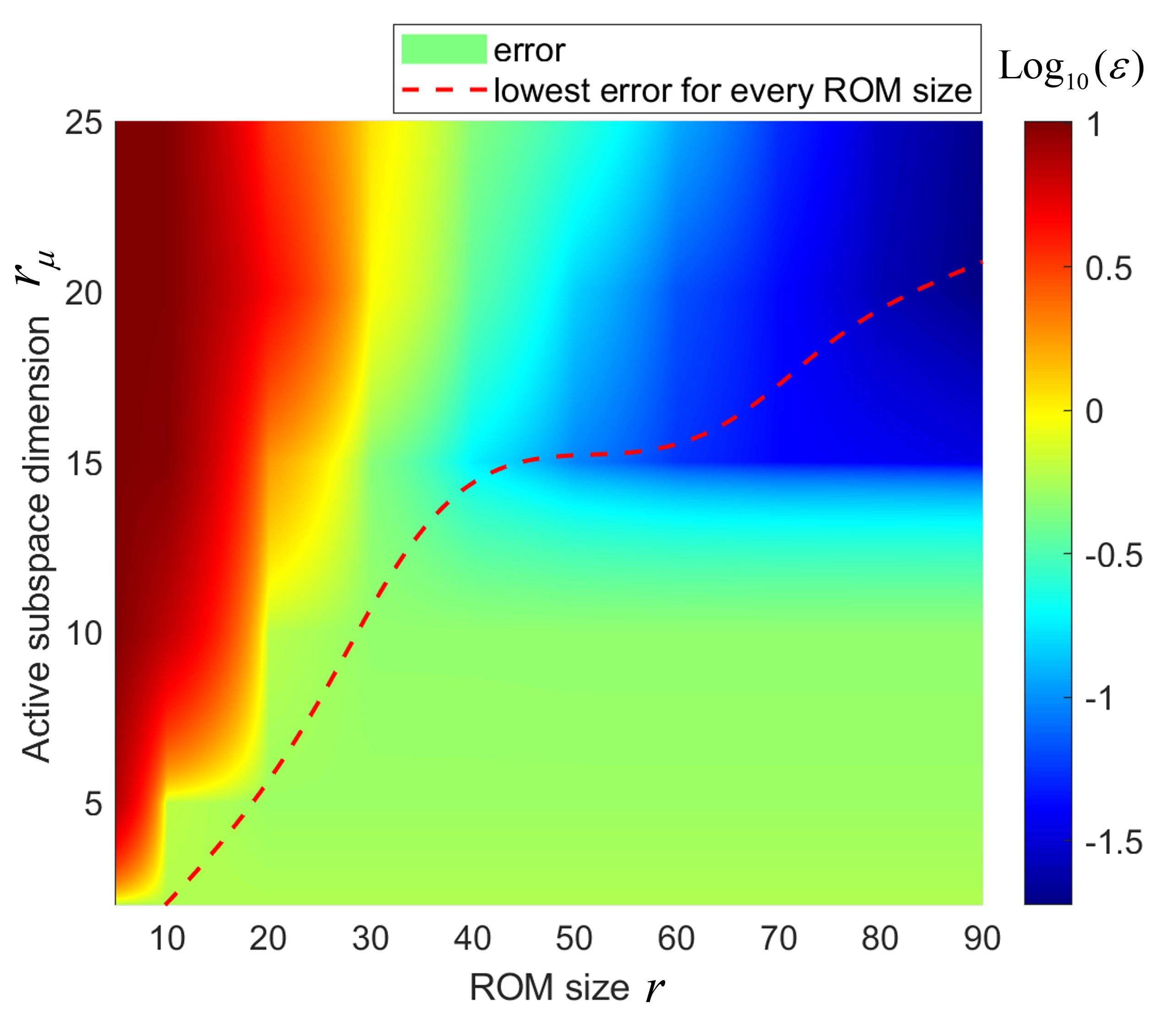}
    \caption{\centering MEMS actuator: relative error of AS ROM (Alg.~\ref{alg:ASpMOR}) changing with different combinations of active subspace dimension $r_\mu$ and ROM size $r$.}
    \label{fig:magact_25_UVError}
\end{figure}

\begin{figure}[!htbp]
    \centering
    \hspace{-4em}\begin{subfigure}{0.5\textwidth}
    \centering
        \begin{tikzpicture}[scale=0.8]
            \begin{axis}[
                font=\large,
                width=2.5in,
                height=2.91in,
                xlabel={Iterations of Algorithm~\ref{alg:IASpMOR}},
                xmin=0,
                xmax=7,
                ylabel={$\operatorname{Log}_{10}(\varepsilon)$},
                ylabel style={yshift=3mm},
                title={},
                yticklabel style={xshift=-1mm}
            ]
            \addplot [color=blue, mark=o]
            coordinates {
            (1,   -0.232608224)
            (2,   -0.645833427)
            (3,   -0.905530704)
            (4,   -1.122551734)
    	  (5,   -1.284723842)
            };
            \legend{IAS}
            \end{axis}
        \end{tikzpicture}
    \end{subfigure}
    \hspace{-1em}\begin{subfigure}{0.5\textwidth}
    \centering
    \begin{tikzpicture}[scale=0.8]
        \begin{axis}[
            font=\large,
            xlabel={(Equivalent) ROM size},
            xmin=0,
            xmax=70,
            ymax=0.25,
            ymin=-1.5,
            ylabel style={yshift=3mm},
            title={},
            yticklabel style={xshift=-1mm}
        ]
        \addplot [color=blue, mark=o]
        coordinates {
            (5,         -0.232608224)
            (10,        -0.645833427)
            (12.5992,   -0.905530704)
            (14.4225,   -1.122551734)	
    	(15.8740,   -1.284723842)
        };

        \addplot [color=red, mark=diamond]
        coordinates {
          (5,    -0.232608224846068)
          (10,	-0.233492665018051)
          (20,	-0.287737616172197)
          (30,	-0.338852476866086)
          (40,	-0.741007093289210)
          (50,    -1.03095529408348)
          (60,	-1.25429399602457)
         };

        \addplot [color=black, mark=triangle]
        coordinates {
          (5,    1.00802629087934)
          (10,	0.999004217292526)
          (20,	0.522896255941952)
          (30,	0.0509886724370720)
          (40,	-0.353077602766038)
          (50,    -0.576108376536962)
          (60,	-0.927912273257516)
          (70,	-1.26914756067130)
         };
         
        \legend{IAS, Best of AS, Snapshot method}
        \end{axis}
    \end{tikzpicture}
    \end{subfigure}
    \caption{\centering MEMS actuator: relative error $\varepsilon$ decay of IAS ROM w.r.t. iterations (left) and the (equivalent) ROM sizes (right).}
    \label{fig:ias_decrease_25}
\end{figure}

\begin{small}
   \begin{table}[!htbp]
      \begin{center}
      \renewcommand\arraystretch{1.5}
        \caption{MEMS actuator: time comparison for solving FOM and ROMs with $\varepsilon\approx5\%$. }
        \begin{tabular}{cccc}
          \hline
          \textbf{{FOM}}& \makecell[c]{\textbf{AS-ROM}} & \makecell[c]{\textbf{IAS-ROMs}}& \makecell[c]{\textbf{IAS-ROMs}}\vspace{-1ex}\\
          (sparse solver) & & (in serial)& (in parallel)\\
          \hline
          2.23 s & 0.59 s & 0.069 s & 0.016 s
          \\
          \hline
        \end{tabular}
        \label{table:mag_static}
      \end{center}
    \end{table} 
\end{small}
\renewcommand{\arraystretch}{1}
\FloatBarrier

The standard AS method (Alg.~\ref{alg:ASpMOR}) requires 129 minutes offline for gradient computation in Step 3, excluding parameter searches for optimal \((r_\mu, r)\) combinations. Including the optimal \((r_\mu, r)\) search in Figure~\ref{fig:magact_25_UVError}, the total time increases significantly to 529 minutes. For the proposed IAS method (Alg.~\ref{alg:IASpMOR}), the first iteration includes gradient computation of the FOM (129 minutes, matching AS), while the subsequent 4 iterations have only additional gradient computations of the ROM (see also (\ref{eq:lazy_gradient})) according to Section 3.5, requiring only 12 minutes in total. This makes the offline time of IAS comparable to AS, while simultaneously improving accuracy and computational efficiency.  

\FloatBarrier
\subsection{MEMS Accelerometer}
Another common case for optimization and analysis is the MEMS accelerometer, which is widely used in navigation systems for automobiles, airplanes, and other vehicles. Through modeling a simplified MEMS accelerometer fabricated by our laboratory, we conducted a model with 188 parameters. To simplify the model, we excluded the electrostatic comb teeth in the structure, which are designed to generate capacitance changes through displacement. The simplified device model is shown in Figure~\ref{fig:XBW-acc}.

\begin{figure}[!htbp]
    \centering
    \includegraphics[width=0.9\linewidth]{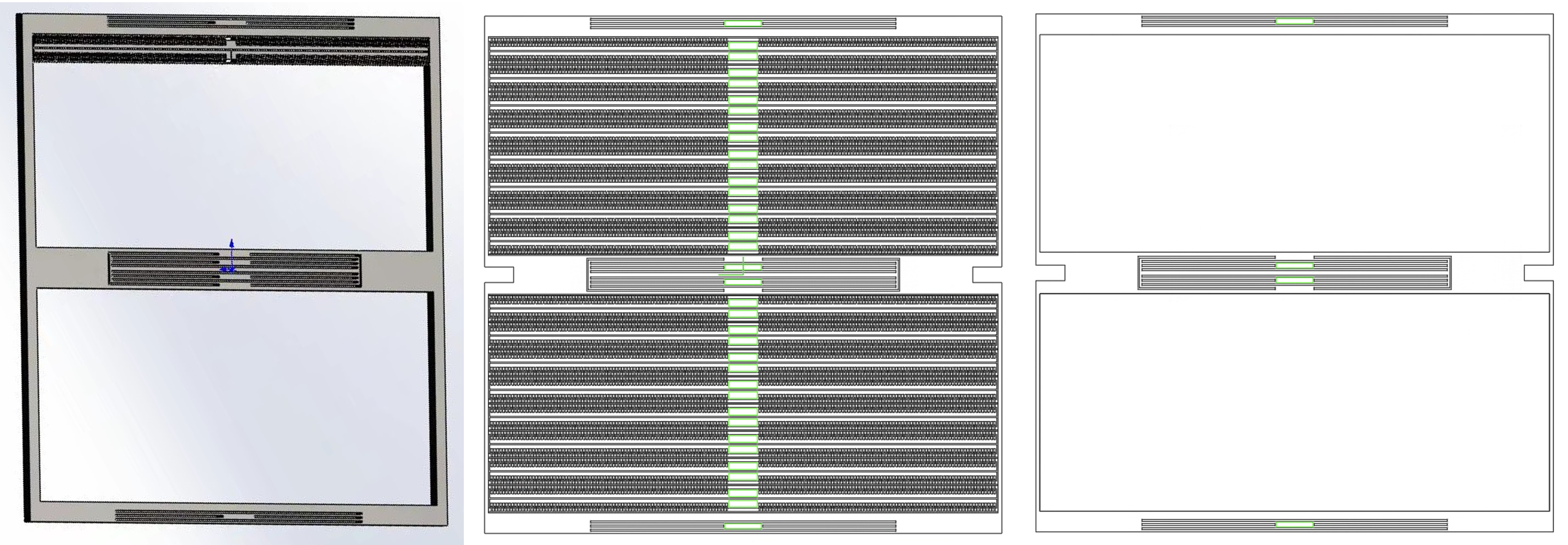}
    \caption{\centering 3D (left), 2D (middle) and 2D simplified (right) model of the accelerometer.}
    \label{fig:XBW-acc}
\end{figure}

For a simpler parametrization process, we introduced parameters in both the x and y directions for the 233 vertices of the boundary polygons, as the structure's shape is defined by these closed polygons. To avoid unreasonable design scenarios, we manually applied linear constraints to these parameters, reducing their total number to 188. We maintained the mesh points on the boundary of the device structure as straight segments. Inside the device structure, we parameterized the mesh points by proposing a spring-based smoothing method, explained in detail in Appendix~\ref{app1}. Figure~\ref{fig:7} illustrates the deformation of the parametric mesh by comparing two configurations: the original mesh (red), where all parameters are zero, and the deformed mesh (blue), where parameters $\mu_5$, $\mu_{16}$, $\mu_{77}$, and $\mu_{78}$ are set to 50~$\mu$m while all other parameters remain zero.

\begin{figure}[!htbp]
    \centering
    \includegraphics[width=0.36\linewidth]{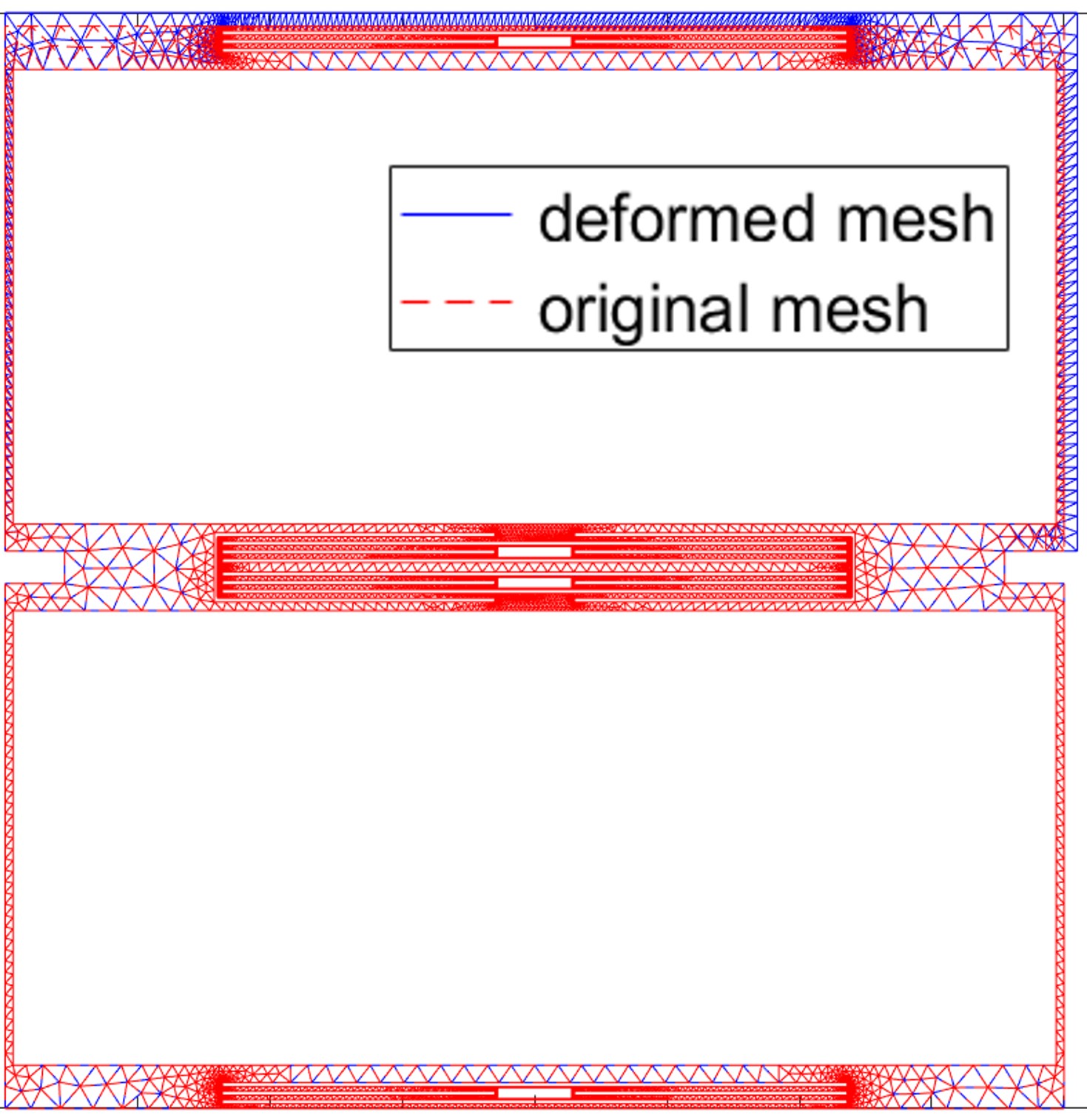}
    \caption{\centering{Parametric mesh of the accelerometer: original mesh (all parameters $=0$); deformed mesh ($\mu_{5},\mu_{16},\mu_{77},\mu_{78}=50$, others $=0$)}.}
    \label{fig:7}
\end{figure}

\begin{figure}[!htbp]
     \centering
     \includegraphics[width=0.36\linewidth]{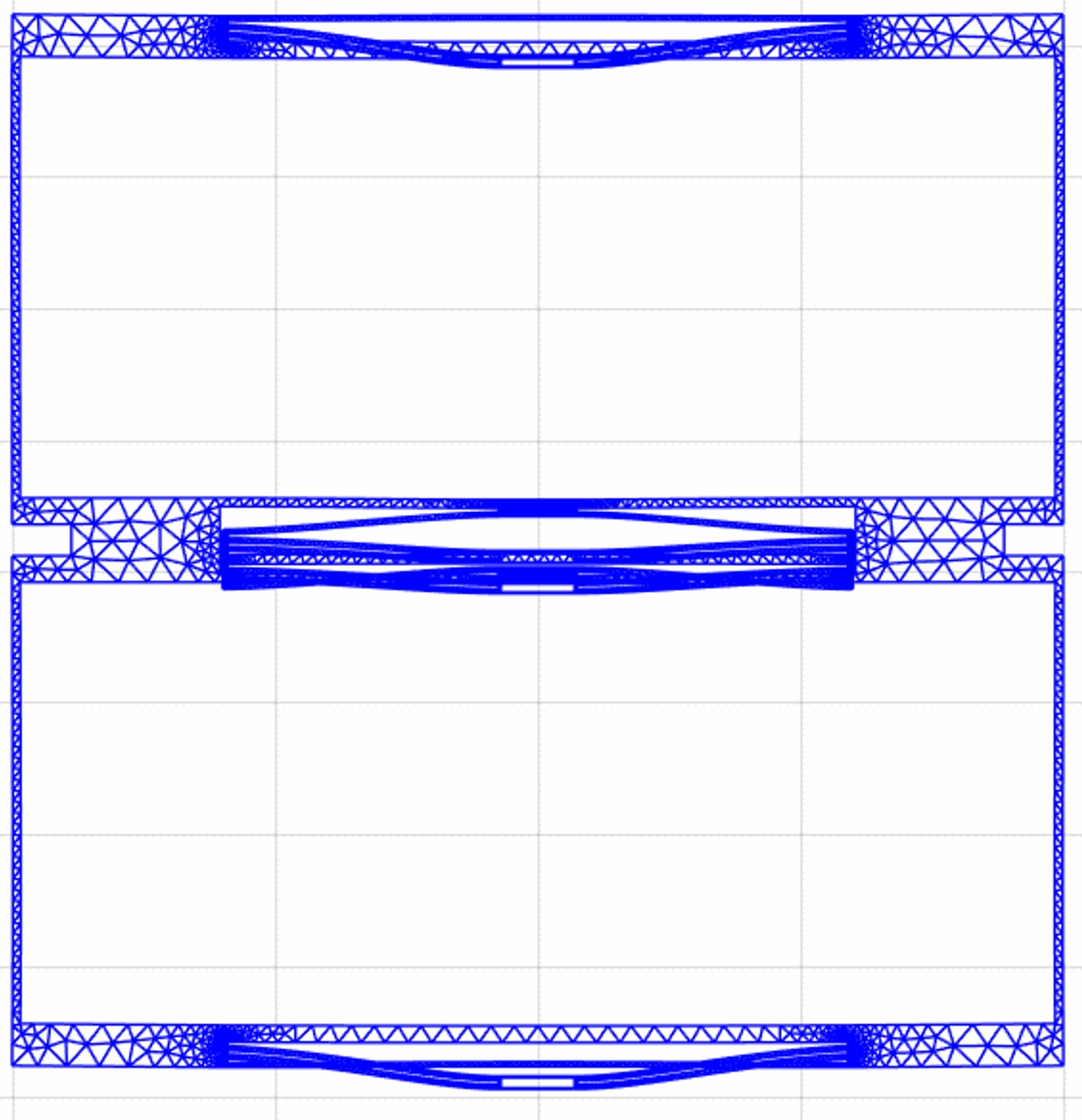}
     \caption{\centering Static deformation of the accelerometer under 10G acceleration in the vertical direction (Displacement with a magnification of $500\times$).}\label{Fig:Data2}
\end{figure}

Thus, we derive a second-order parametric system (\ref{eq:second_order_pLTI}) with $n=15,652$ degrees of freedom and \(n_\mu=188\). $C(\mu)=I$ is an identical matrix, \(B(\mu) \in  \mathbb R^{n}\) is the force on the structure when there is a 10G (gravitational) acceleration in the vertical direction, and \(x \in  \mathbb R^{n}\) is the displacement of all mesh points. Figure~\ref{Fig:Data2} shows the static deformation of the accelerometer under 10G acceleration in the vertical direction. The transfer function of this system is shown in (\ref{eq:trans_func}), where $Q(\mu,s)=s^2M(\mu)+sD(\mu)+K(\mu)$.

For an arbitrary sample point $(\mu^j,s^j)$, $\mu^j=[{\mu_1}^j,{\mu_2}^j,\ldots,{\mu_{n_\mu}}^j]^T$, the derivative of $H(\mu^j,s^j)$ can be written as:
\renewcommand{\arraystretch}{1.5}
    \begin{align}
        \hspace{-2ex}\begin{array}{ll}
             &\frac{\partial H(\mu^j,s^j)}{\partial {\mu_k}^j}=Q(\mu^j,s^j)^{-1}\frac{\partial B(\mu^j)}{\partial {\mu_k}^j}-Q(\mu^j,s^j)^{-1}\frac{\partial Q(\mu^j,s^j)}{\partial {\mu_k}^j}Q(\mu^j,s^j)^{-1}B(\mu^j)\in \mathbb R^{n},\\
             &\nabla_{\mu}H(\mu^j,s^j)=[\frac{\partial H(\mu^j,s^j)}{\partial {\mu_1}^j},\frac{\partial H(\mu^j,s^j)}{\partial {\mu_2}^j},\ldots,\frac{\partial H(\mu^j,s^j)}{\partial {\mu_{n_\mu}}^j}]^T \in \mathbb R^{n_\mu\times n}.
        \end{array}
    \label{eq:derivative_second_order}
\end{align}
\renewcommand{\arraystretch}{1}

Similarly,
\begin{align}
\begin{array}{ll}
    \nabla_{\mu}H_{r_i}(\mu^j,s^j)&=[\frac{\partial H_{r_i}(\mu^j,s^j)}{\partial {\mu_1}^j},\frac{\partial H_{r_i}(\mu^j,s^j)}{\partial {\mu_2}^j},\ldots,\frac{\partial H_{r_i}(\mu^j,s^j)}{\partial {\mu_{n_\mu}}^j}]^T \in \mathbb R^{n_\mu\times n},\\
    \nabla_{\mu}E_{i}(\mu^j,s^j)&=[\frac{\partial (H(\mu^j,s^j)-H_{r_i}(\mu^j,s^j))}{\partial {\mu_1}^j},\ldots,\frac{\partial H(\mu^j,s^j)-\partial H_{r_i}(\mu^j,s^j)}{\partial {\mu_{n_\mu}}^j}]^T \in \mathbb R^{n_\mu\times n}.
\end{array}
\end{align}

From the definition of $\nabla_{\mu}H(\mu^j,s^j)$ and $\nabla_{\mu}E_i(\mu^j,s^j)$, we sample $M=120$ parameter points for computing $\hat C$ in step 3 of Alg.~\ref{alg:ASpMOR} as $$\hat C=\frac{1}{M} \sum_{j=1}^{M} (\nabla_{\mu}H(\mu^j,s^j))^T(\nabla_{\mu}H(\mu^j,s^j)),$$ and computing $\hat C$ in step 3 of Alg.~\ref{alg:IASpMOR} as $$\hat C=\frac{1}{M} \sum_{j=1}^{M} (\nabla_{\mu}E_i(\mu^j,s^j))^T(\nabla_{\mu}E_i(\mu^j,s^j)).$$

We implement Alg.~\ref{alg:ASpMOR}, the standard AS method, and evaluate different combinations of active subspace dimension \(r_\mu\) and ROM size \(r\). For each fixed $r$, the approximation error presented in Figure~\ref{fig:Single active subspace 188} exhibits a similar characteristic pattern as observed in Figure~\ref{fig:magact_25_UVError}, and is characterized by two phases: before reaching the minimum value, the error decays with increasing $r_\mu$; afterwards, the error increases with further $r_\mu$ augmentation. Figure~\ref{fig:Single active subspace 188} visualizes this behavior through two key elements: (1) an error mesh with different \((r_\mu,r)\) combinations, and (2) a dashed line indicating the \(r_\mu\) value corresponding to the lowest error for each ROM size \(r\). When \(r_\mu = n_\mu\), the black line in Figure~\ref{fig:ias_decrease_188} (right) shows that the active subspace method is mathematically equivalent to the snapshot method, as no parameter space compression occurs. At smaller ROM sizes $r$, the active subspace method with reduced parameter dimensions (\(r_\mu < n_\mu\)) demonstrates a clear advantage, achieving reduced-order models with reasonably controlled errors. In contrast, the snapshot method (corresponding to the AS method with \(r_\mu = n_\mu\)) fails entirely to produce models with $\varepsilon<1$ with the same ROM size. This highlights the critical role of parameter-space dimensionality reduction for pMOR with large-dimensional parameter spaces. 
\begin{figure}[!htbp]
    \centering
    \includegraphics[width=0.75\linewidth]{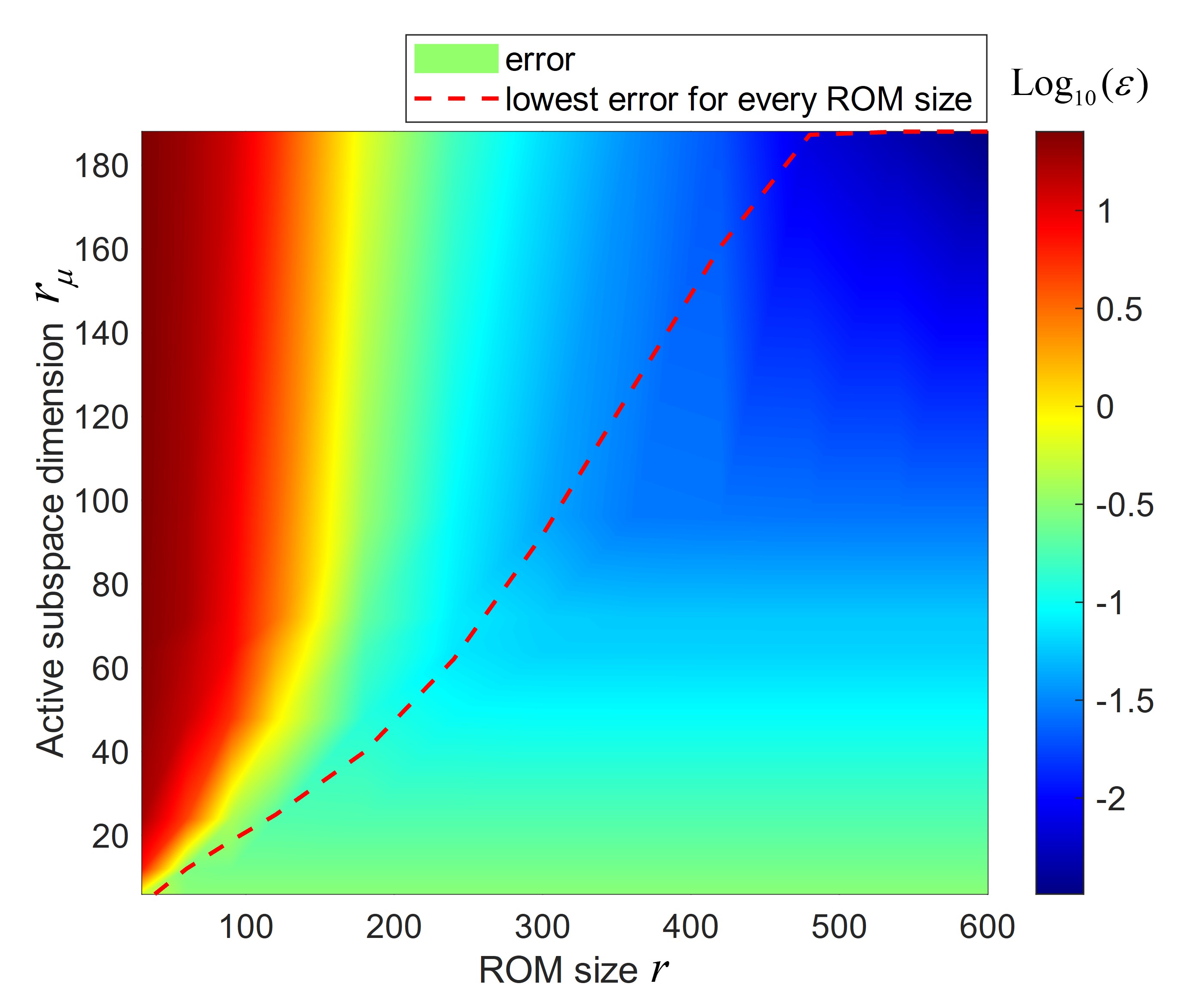}
    \vspace{-2ex}\caption{\centering MEMS accelerometer: relative error of AS ROM (Alg.~\ref{alg:ASpMOR}) changing with different combinations of active subspace dimension $r_\mu$ and ROM size $r$.}
    \label{fig:Single active subspace 188}
\end{figure}
\FloatBarrier
However, as the ROM size $r$ increases, the advantages of the active subspace method gradually diminish (Figure~\ref{fig:ias_decrease_188}). Notably, when higher accuracy requirements are imposed, the performance gain from active subspace becomes less significant. 

By employing the proposed iterative active subspace method (Alg.~\ref{alg:IASpMOR}), with $M_{V} = \frac{1}{2}r_{\mu,i}^2$ and $r_i = 3 \cdot r_{\mu,i}$ in the post-processing phase, we can achieve a much smaller error with the combination of a few smaller ROMs. The result is shown in Figure~\ref{fig:ias_decrease_188}. We can see that the relative error $\varepsilon$ decreases exponentially with the iterations. After 13 iterations with 13 ROMs, we achieve $\varepsilon\approx4.27\%$. Solving these ROMs one by one amounts to solving a single ROM with size $r\approx 59.7$. In contrast, with a single active subspace (Alg.~\ref{alg:ASpMOR}), we achieve similar accuracy ($\varepsilon\approx4.30\%$) with a ROM size of $r=300$. Moreover, the 13 ROMs derived from our iterative method can be solved in parallel, resulting in even greater computational gains. Table~\ref{table:188_rom_time} lists the runtimes of transient solutions to the FOM, the ROM from the standard AS method, and our IAS-ROM, at a testing parameter sample $\mu^*=[\mu_1, \ldots, \mu_{188}]^T$ in a time period of $[0, 500]$ microseconds.

\begin{figure}[!htbp]
    \centering
    \hspace{-3em}\begin{subfigure}{0.45\textwidth}
    \centering
        \begin{tikzpicture}[scale=0.75]
            \begin{axis}[
                font=\large,
                width=2.5in,
                height=2.91in,
                xlabel={Iterations},
                xmin=0,
                xmax=15,
                ylabel={$\operatorname{Log}_{10}(\varepsilon)$},
                ylabel style={yshift=3mm},
                title={},
                yticklabel style={xshift=-1mm}
            ]
            \addplot [color=blue, mark=o]
            coordinates {
            (1, -0.513528641649309)
            (2, -0.852051224598756)
            (3, -0.884516749989795)
            (4, -0.917874049674271)
            (5, -0.957998589263179)
            (6, -1.00499499432448)	
            (7, -1.04953419033504)
            (8, -1.09612392656925)
            (9, -1.15953030896753)
            (10, -1.20451208096091)
            (11, -1.25975312072839)
            (12, -1.31059469291642)
            (13, -1.36999165531536)
            
            };
            \legend{IAS}
            \end{axis}
        \end{tikzpicture}
    \end{subfigure}
    \hspace{-1em}\begin{subfigure}{0.45\textwidth}
    \centering
    \begin{tikzpicture}[scale=0.75]
        \begin{axis}[
            font=\large,
            xlabel={(Equivalent) ROM size},
            xmin=10,
            xmax=320,
            ymax=0,
            ymin=-1.5,
            ylabel={$\operatorname{Log}_{10}(\varepsilon)$},
            ylabel style={yshift=3mm},
            title={},
            yticklabel style={xshift=-1mm}
        ]

        \addplot [color=blue, mark=o]
        coordinates {							

        (21.0000000000000, -0.513528641649309)
        (33.0988769953938, -0.852051224598756)
        (35.7059378584483, -0.884516749989795)
        (39.0059162622267, -0.917874049674271)
        (41.8258091718356, -0.957998589263179)
        (44.3094575618814, -1.00499499432448)	
        (46.5421054626723, -1.04953419033504)
        (48.5789270018360, -1.09612392656925)
        (51.2136367078195, -1.15953030896753)
        (53.6020122508827, -1.20451208096091)
        (55.7946347020751, -1.25975312072839)
        (57.8272704894256, -1.31059469291642)
        (59.7262529419056, -1.36999165531536)

        };

        \addplot [color=red, mark=diamond]
        coordinates {
        						
        (30,   -0.219478445493176)
        (60,  -0.566038405214920)
        (90,  -0.593661540041240)
        (120,  -0.721119477303452)
        (180,  -0.899882259131360)
        (240,  -1.12198280060338)
        (300,  -1.36667622112262)
        };

        \addplot [color=black, mark=triangle]
        coordinates{

        (30, 1.40247965625539)	
        (60, 1.28056798103654)	
        (90, 1.09052528722691)	
        (120, 0.734472017464898)	
        (180, -0.168571127196827)	
        (240, -0.785392228202909)	
        (300, -1.15939485304881)
        };
        \legend{IAS, Best of AS, Snapshot method}
        \end{axis}
    \end{tikzpicture}
    \end{subfigure}
    \caption{\centering MEMS accelerometer: relative error $\varepsilon$ decay with iterations (left) and (equivalent) ROM sizes (right).}
    \label{fig:ias_decrease_188}
\end{figure}
\begin{small}
   \begin{table}[!htbp]
      \begin{center}
      \renewcommand\arraystretch{1.5}
        \caption{\centering MEMS accelerometer: time comparison for solving FOM and ROMs with $\varepsilon\approx4.3\%$. }
        \begin{tabular}{cccc}
          \hline
          \textbf{{FOM}}& \makecell[c]{\textbf{AS-ROM}} & \makecell[c]{\textbf{IAS-ROMs}}& \makecell[c]{\textbf{IAS-ROMs}}\vspace{-1ex}\\
          (sparse solver) & & (in serial)& (in parallel)\\
          \hline
          1.45 s & 0.328 s & 0.0981 s & 0.00795 s
          \\
          \hline
        \end{tabular}
        \label{table:188_rom_time}
      \end{center}
    \end{table} 
\end{small}
\renewcommand{\arraystretch}{1}		
\FloatBarrier

Considering the offline time, the standard AS method (Alg.~\ref{alg:ASpMOR}) requires about 108 minutes (excluding parameter search for an optimal $(r_\mu, r)$ combination). The $(r_\mu, r)$ search in Figure~\ref{fig:Single active subspace 188} leads to a total time of 313 minutes. According to Section 3.5, for the proposed IAS method (Alg.~\ref{alg:IASpMOR}), the 13 iterations shown in Figure~\ref{fig:ias_decrease_188} involve gradient computation of the FOM during the first iteration (108 minutes, same as Alg.~\ref{alg:ASpMOR}) and gradient computation of the ROM at subsequent iterations (372 minutes for 12 iterations). While IAS exhibits a moderate increase in the offline time compared to AS, it achieves enhanced accuracy and greater computational gain in the online phase.

\FloatBarrier
\section{Conclusion}

In this paper, we propose an iterative active subspace method for projection-based parametric model order reduction. This approach outperforms the existing active subspace method by iteratively generating multiple small active subspace ROMs. The final ROM is the sum of the multiple small ROMs derived at all previous iterations. Consequently, simulating the final ROM can be done via simulating the multiple ROMs in parallel, which is much more efficient than simulating the single big ROM obtained from the standard active subspace method. The experimental results show that our proposed method is a robust tool for parametric model order reduction of problems with high-dimensional parameter spaces.

\section{Data availability}
 The code and data will be made available on Zenodo upon publication.

\appendix
\numberwithin{equation}{section}
\numberwithin{figure}{section}
\numberwithin{table}{section}

\section{Proofs of Theorems~\ref{theo} and~\ref{theo_all}}
\label{app:proofs}

\subsection{Proof of Theorem~\ref{theo}}
\label{app:proof_theo}
\begin{proof}[Proof of Theorem~\ref{theo}]

Assume \(U_i=U_{i-1}\), s.t. \(V_i=V_{i-1}\). Since \(U_\tau,\tau=1,2,\ldots,\) are orthogonal matrices, we have:
\[U_iU_i^TU_{i-1}U_{i-1}^T=U_iU_i^T=U_{i-1}U_{i-1}^T.\]
From (\ref{eq:IASpMOR_general}) and (\ref{eq:IASpMOR_general_E}),
\begin{align}
     \begin{array}{llll}
     E_{r_{i-1}}(\mu,s)&=\mathrm{ASpMOR}_{V_i}^{U_i}(H(\mu,s))-H_{r_{i-1}}(U_iU_i^T\mu,s)
     \\&=\mathrm{ASpMOR}_{V_i}^{U_i}(H(\mu,s))
     -H_{r_{i-2}}(U_iU_i^T\mu,s)\\& \hspace{2.4ex}-\hspace{0.5ex} \mathrm{ASpMOR}_{V_{i-1}}^{U_{i-1}}(H(U_iU_i^T\mu,s))\\& \hspace{2.4ex}+ \hspace{0.5ex} \mathrm{ASpMOR}_{I_{\text{\hspace{1.05ex} }}}^{U_{i-1}}(H_{r_{i-2}}(U_iU_i^T\mu,s)).
    \end{array}
    \label{eq:E_r_i-1_full}
\end{align}
$U_i=U_{i-1}$ and $V_i=V_{i-1}$ lead to:
    \[\mathrm{ASpMOR}_{V_{i-1}}^{U_{i-1}}(H(U_iU_i^T\mu,s))=\mathrm{ASpMOR}_{I_{\text{\hspace{1.05ex} }}V_{i-1}}^{U_iU_{i-1}}(H(\mu,s))=\mathrm{ASpMOR}_{V_{i}}^{U_{i}}(H(\mu,s)),\] 
and
    \[\mathrm{ASpMOR}_{I_{\text{\hspace{1.05ex} }}}^{U_{i-1}}(H_{r_{i-2}}(U_iU_i^T\mu,s))=H_{r_{i-2}}(U_{i-1}U_{i-1}^TU_iU_i^T\mu,s)=H_{r_{i-2}}(U_iU_i^T\mu,s).\]
\hspace{-1ex}Thus, the last two terms in (\ref{eq:E_r_i-1_full}) cancel out the first two terms, respectively, making \(E_{r_{i-1}}(\mu,s)=0\).
\end{proof}

\subsection{Proof of Theorem~\ref{theo_all}}
\label{app:proof_theo_all}
\begin{proof}[Proof of Theorem~\ref{theo_all}]
If \({U_l}^TU_j=0\), \(l\neq j\), \(l,j\leq i-1\), then following (\ref{eq:IASpMOR_general}) and (\ref{eq:IASpMOR_general_E}),
\renewcommand{\arraystretch}{1.5} 
\begin{align}
    \begin{array}{rl}
         H_{r_{i-1}}(\mu,s)&=H_{r_{i-2}}(\mu,s)+\mathrm{ASpMOR}_{V_{i-1}}^{U_{i-1}}(H(\mu,s))-\mathrm{ASpMOR}_{I_{\text{\ }}}^{U_{i-1}}(H_{r_{i-2}}(\mu,s))\\&=H_{r_{i-2}}(\mu,s)+\mathrm{ASpMOR}_{V_{i-1}}^{U_{i-1}}(H(\mu,s))-\mathrm{ASpMOR}_{I_{\text{\ }}}^{U_{i-1}}(H_{r_{i-3}}(\mu,s))\\&\hspace{2.5ex}-\mathrm{ASpMOR}_{I_{\text{\hspace{3ex} }}V_{i-2}}^{U_{i-1}U_{i-2}}(H(\mu,s))+\mathrm{ASpMOR}_{I_{\text{\hspace{3ex} }}I_{\text{\ }}}^{U_{i-1}U_{i-2}}(H_{r_{i-3}}(\mu,s))\\
         &=H_{r_{i-2}}(\mu,s)+\mathrm{ASpMOR}_{V_{i-1}}^{U_{i-1}}(H(\mu,s))-\mathrm{ASpMOR}_{I_{\text{\ }}}^{U_{i-1}}(H_{r_{i-3}}(\mu,s))\\&\hspace{2.5ex}-\mathrm{ASpMOR}_{V_{i-2}}^{I_{\text{\ }}}(H(0,s))+H_{r_{i-3}}(0,s).
    \end{array}
    \label{eq:IASpMOR_general_orth_half}
\end{align}
\renewcommand{\arraystretch}{1}

For an arbitrary $j\leq i-1$, substituting $\mu=0$ in (\ref{eq:IASpMOR_general}) and (\ref{eq:IASpMOR_general_E}) gives
\renewcommand{\arraystretch}{1.5}\begin{align}
\begin{array}{rl}
     H_{r_{j}}(0,s)&=H_{r_{j-1}}(0,s)+\mathrm{ASpMOR}_{V_{j}}^{U_{j}}(H(0,s))-\mathrm{ASpMOR}_{I_{\text{\ }}}^{U_{j}}(H_{r_{j-1}}(0,s))  \\
     &=H_{r_{j-1}}(0,s)+\mathrm{ASpMOR}_{V_{j}}^{I_{\text{\ }}}(H(0,s))-H_{r_{j-1}}(0,s)\\
     &=\mathrm{ASpMOR}_{V_{j}}^{I_{\text{\ }}}(H(0,s)).
\end{array}
\label{eq:H_r_zero_simp}
\end{align}
\renewcommand{\arraystretch}{1}

In the last equality of (\ref{eq:IASpMOR_general_orth_half}), replacing $H_{r_{i-3}}(\mu,s)$ with its expression from (\ref{eq:IASpMOR_general}), and applying (\ref{eq:H_r_zero_simp}) to $H_{r_{i-3}}(0,s)$ leads to
\renewcommand{\arraystretch}{1.5} 
\begin{align}
    \begin{array}{rl}
         H_{r_{i-1}}(\mu,s)
         &=H_{r_{i-2}}(\mu,s)+\mathrm{ASpMOR}_{V_{i-1}}^{U_{i-1}}(H(\mu,s))-\mathrm{ASpMOR}_{I_{\text{\ }}}^{U_{i-1}}(H_{r_{i-3}}(\mu,s))\\&\hspace{2.5ex}-\mathrm{ASpMOR}_{V_{i-2}}^{I_{\text{\ }}}(H(0,s))+\mathrm{ASpMOR}_{V_{i-3}}^{I_{\text{\ }}}(H(0,s))\\
         &=H_{r_{i-2}}(\mu,s)+\mathrm{ASpMOR}_{V_{i-1}}^{U_{i-1}}(H(\mu,s))-\mathrm{ASpMOR}_{I_{\text{\ }}}^{U_{i-1}}(H_{r_{i-4}}(\mu,s))\\&\hspace{2.5ex}-\mathrm{ASpMOR}_{I_{\text{\hspace{3ex} }}V_{i-3}}^{U_{i-1}U_{i-3}}(H(\mu,s))+\mathrm{ASpMOR}_{I_{\text{\hspace{3ex} }}I_{}}^{U_{i-1}U_{i-3}}(H_{r_{i-4}}(\mu,s))\\&\hspace{2.5ex}-\mathrm{ASpMOR}_{V_{i-2}}^{I_{\text{\ }}}(H(0,s))+\mathrm{ASpMOR}_{V_{i-3}}^{I_{\text{\ }}}(H(0,s))\\
         &=H_{r_{i-2}}(\mu,s)+\mathrm{ASpMOR}_{V_{i-1}}^{U_{i-1}}(H(\mu,s))-\mathrm{ASpMOR}_{I_{\text{\ }}}^{U_{i-1}}(H_{r_{i-4}}(\mu,s))\\&\hspace{2.5ex}-\mathrm{ASpMOR}_{V_{i-3}}^{I}(H(0,s))+H_{r_{i-4}}(0,s)\\&\hspace{2.5ex}-\mathrm{ASpMOR}_{V_{i-2}}^{I_{\text{\ }}}(H(0,s))+\mathrm{ASpMOR}_{V_{i-3}}^{I_{\text{\ }}}(H(0,s)).\\
         \end{array}
\end{align}
Repeatedly applying (\ref{eq:IASpMOR_general}) to all $H_{r_j}(\mu, s)$ and replacing $H_{r_j}(0, s)$ with $\mathrm{ASpMOR}^I_{V_j} H(0, s)$ for all $j\leq i-4$, we get

\begin{align}
        \begin{array}{ll}
         H_{r_{i-1}}(\mu,s)
         &=H_{r_{i-2}}(\mu,s)+\mathrm{ASpMOR}_{V_{i-1}}^{U_{i-1}}(H(\mu,s))-\mathrm{ASpMOR}_{I_{\text{\ }}}^{U_{i-1}}(H_{r_{i-4}}(\mu,s))\\&\hspace{2.5ex}-\mathrm{ASpMOR}_{V_{i-3}}^{I}(H(0,s))+\mathrm{ASpMOR}_{V_{i-4}}^{I_{\text{\ }}}(H(0,s))\\&\hspace{2.5ex}-\mathrm{ASpMOR}_{V_{i-2}}^{I_{\text{\ }}}(H(0,s))+\mathrm{ASpMOR}_{V_{i-3}}^{I_{\text{\ }}}(H(0,s))\\
         &\vdots\\
         &=H_{r_{i-2}}(\mu,s)+\mathrm{ASpMOR}_{V_{i-1}}^{U_{i-1}}(H(\mu,s))-\mathrm{ASpMOR}_{I_{\text{\ }}}^{U_{i-1}}(H_{r_{0}}(\mu,s))\\&\hspace{2.5ex}-\sum\limits_{j=1}\limits^{i-2}\mathrm{ASpMOR}_{V_{j}}^{I}(H(0,s))+\sum\limits_{j=0}\limits^{i-3}\mathrm{ASpMOR}_{V_{j}}^{I_{\text{\ }}}(H(0,s))\\
         &=H_{r_{i-2}}(\mu,s)+\mathrm{ASpMOR}_{V_{i-1}}^{U_{i-1}}(H(\mu,s)))-H_r(0,s)\\&\hspace{2.5ex}-\mathrm{ASpMOR}_{V_{i-2}}^{I}(H(0,s))+\mathrm{ASpMOR}_{V_{0}}^{I_{\text{\ }}}(H(0,s))~~~~~~~~~\text{\footnotesize{$(H_{r_0}(\mu,s)=H_r(0,s))$}}\\
         &=H_{r_{i-2}}(\mu,s)+\mathrm{ASpMOR}_{V_{i-1}}^{U_{i-1}}(H(\mu,s))-H_r(0,s)\\&\hspace{2.5ex}-\mathrm{ASpMOR}_{V_{i-2}}^{I}(H(0,s))+H_r(0,s)~~~~~~~~~~~~\text{\footnotesize{$(\mathrm{ASpMOR}_{V_0}^I(H(0,s))=H_r(0,s))$}}\\
         &=H_{r_{i-2}}(\mu,s)+\mathrm{ASpMOR}_{V_{i-1}}^{U_{i-1}}(H(\mu,s))-H_{r_{i-2}}(0,s).
    \end{array}
    \label{eq:IASpMOR_general_orth_half_further}
\end{align}

\renewcommand{\arraystretch}{1}

Equation (\ref{eq:IASpMOR_general_orth_half_further}) provides a recursive expression for \( H_{r_{i-1}}(\mu,s) \), from which we can derive the following accumulative expression:
\begin{align}
\hspace{-3ex}\begin{array}{rl}
     H_{r_{i-1}}(\mu,s)&=H_{r_{i-2}}(\mu,s)+\mathrm{ASpMOR}_{V_{i-1}}^{U_{i-1}}(H(\mu,s))-H_{r_{i-2}}(0,s)  \\
     & =H_{r_{i-3}}(\mu,s)+\mathrm{ASpMOR}_{V_{i-2}}^{U_{i-2}}(H(\mu,s))-H_{r_{i-3}}(0,s)\\&\hspace{13.8ex}+\hspace{0.5ex}\mathrm{ASpMOR}_{V_{i-1}}^{U_{i-1}}(H(\mu,s))-H_{r_{i-2}}(0,s)\\
     &\hspace{0.675ex}\vdots\\
     &=H_{r_0}(\mu,s)+\sum\limits_{j=1}\limits^{i-1}\mathrm{ASpMOR}_{V_{j}}^{U_{j}}(H(\mu,s))-\sum\limits_{j=0}\limits^{i-2}H_{r_j}(0,s)\hspace{1ex}~\text{\footnotesize{$(H_{r_0}(\mu,s)=H_{r_0}(0,s))$}}\\
     &=\sum\limits_{j=1}\limits^{i-1}\mathrm{ASpMOR}_{V_{j}}^{U_{j}}(H(\mu,s))-\sum\limits_{j=1}\limits^{i-2}\mathrm{ASpMOR}_{V_{j}}^{I}H(0,s),\hspace{1ex}i\geq2.
\end{array}
    \label{eq:accumulative_IAS_pMOR_half}
\end{align}

If $\exists~g<i-1$, $U_i=U_g$, s.t. $V_i=V_g$, then according to (\ref{eq:IASpMOR_general}) and (\ref{eq:accumulative_IAS_pMOR_half}),
\renewcommand{\arraystretch}{1.5} 
\begin{align}
    \hspace{-2ex}\begin{array}{rl}
         H_{r_{i}}(\mu,s)
         &=H_{r_{i-1}}(\mu,s)+\mathrm{ASpMOR}_{V_i}^{U_i}(H(\mu,s))-\mathrm{ASpMOR}_{I_{\text{\ }}}^{U_i}(H_{r_{i-1}}(\mu,s))\\
         &=H_{r_{i-1}}(\mu,s)+\mathrm{ASpMOR}_{V_i}^{U_i}(H(\mu,s))\\
         &\hspace{2.5ex}-\sum\limits_{j=1}\limits^{i-1}\mathrm{ASpMOR}_{I_{\hspace{1.5ex}}V_{j}}^{U_{i}U_{j}}(H(\mu,s))+\sum\limits_{j=1}\limits^{i-2}\mathrm{ASpMOR}_{V_{j}}^{U_i}H(0,s)\\
         &=H_{r_{i-1}}(\mu,s)+\mathrm{ASpMOR}_{V_i}^{U_i}(H(\mu,s))-\mathrm{ASpMOR}_{I_{\hspace{1.5ex}}V_{g}}^{U_{i}U_{g}}(H(\mu,s))\\
         &\hspace{2.5ex}-\sum\limits_{j=1,j\neq g}\limits^{i-1}\mathrm{ASpMOR}_{V_{j}}^{I}(H(0,s))+\sum\limits_{j=1}\limits^{i-2}\mathrm{ASpMOR}_{V_{j}}^{I}H(0,s)\\
         &= H_{r_{i-1}}(\mu,s)+\mathrm{ASpMOR}_{V_i}^{U_i}(H(\mu,s))-\mathrm{ASpMOR}_{V_{i}}^{U_{i}}(H(\mu,s))~~~\textbf{\footnotesize{$(U_g=U_i,~V_g=V_i)$}}\\
         &\hspace{2.5ex}-\mathrm{ASpMOR}_{V_{i-1}}^{I}(H(0,s))+\mathrm{ASpMOR}_{V_{g}}^{I}(H(0,s))\\
         &=H_{r_{i-1}}(\mu,s)-\mathrm{ASpMOR}_{V_{i-1}}^{I}(H(0,s))+\mathrm{ASpMOR}_{V_{i}}^{I}(H(0,s))~~~~~~~~~~~~~\textbf{\footnotesize{$(V_g=V_i)$}}\\
         &=H_{r_{i-1}}(\mu,s)-H_{r_{i-1}}(0,s)+H_{r_i}(0,s).
    \end{array}
\label{eq:IASpMOR_general_orth_simp}
\end{align}
\renewcommand{\arraystretch}{1}
\end{proof}

\section{Finite element modeling with parametric mesh}
\label{app1}
The implementation of parametric model order reduction (pMOR) necessitates a parametric representation of the underlying system. For finite element models, parametric forms are conventionally derived via data-driven approaches such as interpolation~\cite{morLieRK06,morBauBBetal11,morGeuPL13}, operator inference~\cite{morPehW16,morKraPW24}, etc. However, in scenarios involving high-dimensional structural parameters (e.g., geometric, material, or boundary condition parameters), these methods become infeasible due to the curse of dimensionality---exponential growth in required training data and computational costs as the number of parameters increases. 

To circumvent these limitations, this work adopts an analytical methodology based on a parametric mesh to explicitly construct the parametric model. By establishing a direct geometric mapping between structural parameters and nodal coordinates through the parametric mesh, we derive analytical expressions for parameter-dependent system matrices (e.g., stiffness matrices) without relying on data-driven approximations. Moreover, this approach ensures analytical differentiability of the system equations with respect to all geometric parameters~\cite{morFroGGetal19}, which is required by the active subspace method, as the parametric mesh constructed in this work guarantees continuously differentiable nodal coordinates with respect to design parameters. Our parametric FEM tool is implemented in MATLAB, and its symbolic computation function is powered by the self-contained symbolic framework of CasADi~\cite{AndGHetal19}.

During the design process, users can define deformation parameters $\mu=[\mu_1, \mu_2, \ldots, \mu_{n_\mu}]$ on the coordinates of the boundary points of the geometric model. The internal mesh points then move in accordance with the deformations of boundary points. A spring smoothing mechanism is employed to adjust the mesh model dynamically, avoiding topology changes during data generation.\vspace{-2ex}

\vspace{-2ex}\begin{align}
P(\mu)=\tilde{K}_s^{-1}{F_B}(\mu) = \tilde{K}_s^{-1} \cdot \left[\begin{array}{c}
0\\
\smash{\vdots}\\
0\\
{f_1(\mu)}\\
0\\
\smash{\vdots}\\
0\\
{f_2(\mu)}\\
0\\
\smash{\vdots}\\
\smash{\vdots}\\
\smash{\vdots}\\
0\\
{f_m(\mu)}\\
0\\
\smash{\vdots}
\end{array} \right] = \tilde{K}_s^{-1} \cdot \underbrace{\left[ \begin{array}{cccc}
0&0&&0\\
\smash{\vdots}&\smash{\vdots}&&\smash{\vdots}\\
0&\smash{\vdots}&&\smash{\vdots}\\
1&\smash{\vdots}&&\smash{\vdots}\\
0&\smash{\vdots}&&\smash{\vdots}\\
\smash{\vdots}&\smash{\vdots}&&\smash{\vdots}\\
\smash{\vdots}&0&&\smash{\vdots}\\
\smash{\vdots}&1&&\smash{\vdots}\\
\smash{\vdots}&0&&\smash{\vdots}\\
\smash{\vdots}&\smash{\vdots}&\ldots&\smash{\vdots}\\
\smash{\vdots}&\smash{\vdots}&&\smash{\vdots}\\
\smash{\vdots}&\smash{\vdots}&&\smash{\vdots}\\
\smash{\vdots}&\smash{\vdots}&&0\\
\smash{\vdots}&\smash{\vdots}&&1\\
\smash{\vdots}&\smash{\vdots}&&0\\
\smash{\vdots}&\smash{\vdots}&&\smash{\vdots}
\end{array} \right]}_{\mathlarger{\tilde{F}}} \cdot \underbrace{\left[\begin{array}{cccc}
{f_1(\mu)}\\
{f_2(\mu)}\\
\vdots\\
{f_m(\mu)}
\end{array}\right]}_{\mathlarger{\tilde{f}(\mu)}}.
\label{eq_para_mesh}
\end{align}

The construction of the overall deformation spring stiffness matrix, $K_s$, is facilitated by dividing the domain into triangular meshes and attaching a spring to each edge of the mesh. The boundary deformation parameters are set as fixed boundary conditions $ F_B(\mu) = [0, \ldots, f_1(\mu), \ldots, 0, \ldots,  f_2(\mu), \ldots]^T$, where $f^j(\mu),j=1,\ldots,m,$ are the functions describing how the boundary points move with the parameters $\mu$. By imposing fixed boundary displacement conditions, \( K_s \) is processed (rows of \( K_s \) with displacement parameters are replaced with corresponding rows from the identity matrix \( I \)), resulting in the modified stiffness matrix \( \tilde{K}_s \). Solving the parametric linear system \( \tilde{K}_s P(\mu) = F_B(\mu) \) yields explicit parametric expressions for each internal mesh point (see~\ref{eq_para_mesh}). Given the sparsity of $F_B(\mu)$, we only need to solve a linear system with $m$ right-hand sides to obtain $\tilde P:=\tilde K_s^{-1} \tilde F$ once, solution $P(\mu)$ at any sample $\mu^\ast$ of the parameter $\mu$ can then be computed by a single matrix multiplication $P(\mu^\ast)=\tilde P \tilde f(\mu^\ast)$.

This process provides explicit parametric expressions $P(\mu)$ for each mesh point. We call the proposed parametric modeling approach the spring smoothing method. Figure~\ref{fig:Deform_L_mesh} shows an example of an L-shaped structure whose corner points of the contour are manually parametric-defined, and when the parameter changes, the other mesh points move with the corner points according to the spring smoothing method. 

Finally, the elemental stiffness matrix $k_e(\mu)_i$ can be constructed with $P(\mu)$ and the invariant topological relationships as below
\begin{align}
    k_e(\mu)_i = {B_i(\mu)^T}DB_i(\mu)tA_i(\mu).
    \label{equ:element_k}
\end{align}
Here $B_i(\mu)$ is a matrix that contains shape function derivatives for the $i$-th triangular element of the mesh, $D$ is the elasticity matrix, $t$ and $A_i(\mu)$ are the thickness and area of that element.

Thus, the stiffness matrix \( K(\mu) \) and the mass matrix \( M(\mu) \) can be established by assembling $k_e(\mu)_i$. While  $B_i(\mu)$ and $A_i(\mu)$ are differentiable w.r.t. the parameters $\mu$, \( K(\mu) \) and \( M(\mu) \) are differentiable.

\begin{figure}[!htbp]
    \centering
    \includegraphics[width=0.8\linewidth]{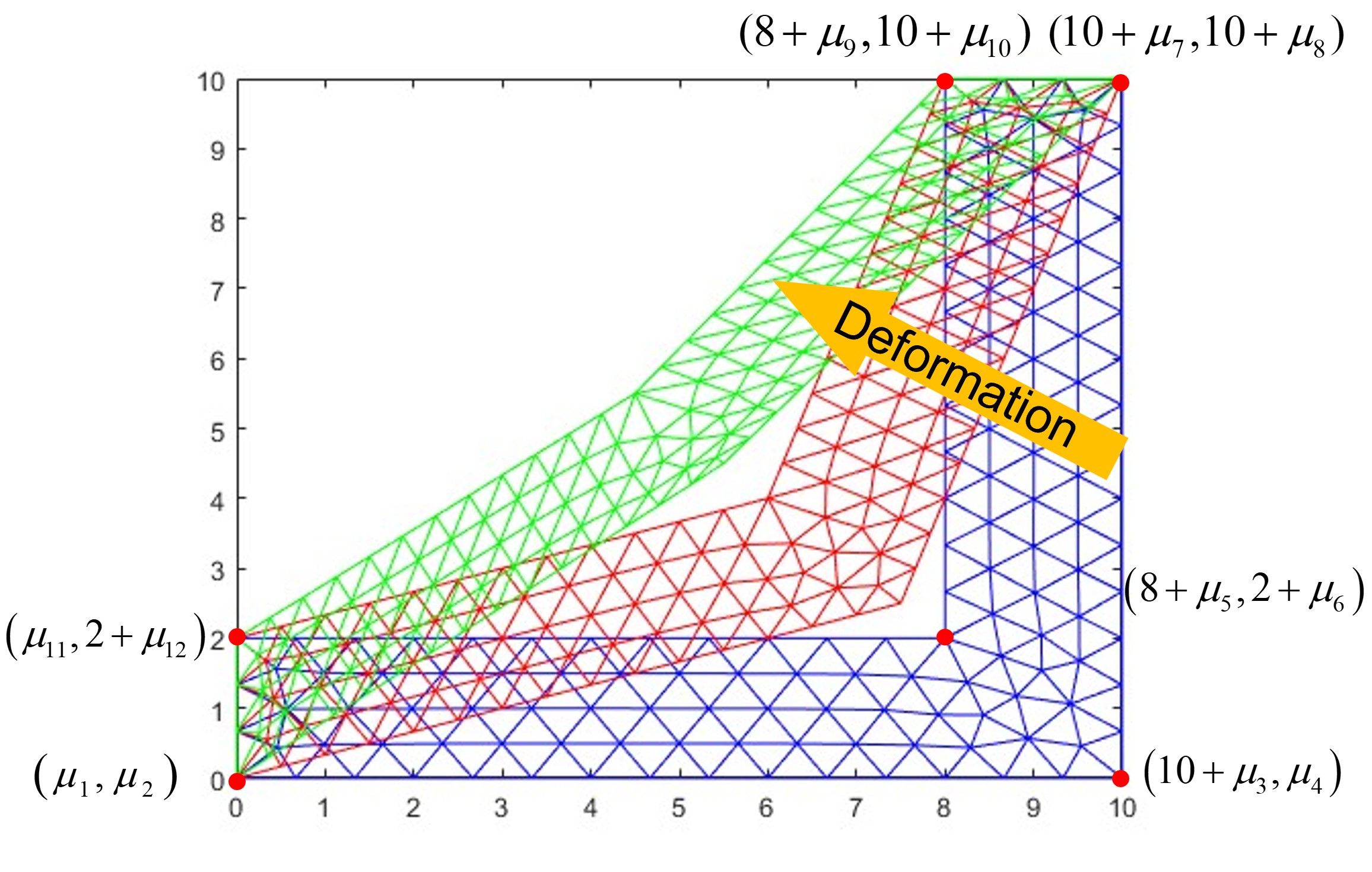}
    \caption{Deformation of an L-shape structure mesh.}
    \label{fig:Deform_L_mesh}
\end{figure}

\clearpage

\printcredits

\section*{Acknowledgements}
Chenzi Wang, Peizhi Yu, Wenshuai Lu, and Zheng You acknowledge support from the National Natural Science Foundation of China (Grant No. U21A6003). Chenzi Wang particularly thanks Dr. Lihong Feng and Prof. Peter Benner for hosting his research visit at the Max Planck Institute for Dynamics of Complex Technical Systems (Magdeburg), which created essential conditions for this international collaboration. The authors gratefully acknowledge Professor Bin Zhou and Dr. Bowen Xing for providing their MEMS accelerometer model and Dr. Wei Bian for sharing his MEMS actuator model, which are instrumental in the numerical validation of our proposed method. The authors specifically acknowledge CasADi~\cite{AndGHetal19} for its powerful symbolic framework, which is crucial to our parametric computations. 

\bibliographystyle{elsarticle-num}
\bibliography{references}

\end{document}